\newcommand{\Aa}{\mathbb{A}}
\newcommand{\Zz}{\mathbb{Z}}
\newcommand{\Rr}{\mathbb{R}}
\newcommand{\Cc}{\mathbb{C}}
\newcommand{\Qq}{\mathbb{Q}}
\newcommand{\Ima}{\operatorname{Im}}
\newcommand{\Ree}{\operatorname{Re}}
\newcommand{\bs}{\backslash}
\newcommand{\mc}{\mathcal}
\newcommand{\mf}{\mathfrak}
\renewcommand{\d}{\mathrm{d}}
\newcommand{\GL}{\mathrm{GL}}
\newcommand{\PGL}{\mathrm{PGL}}
\newcommand{\G}{\mathrm{G}}
\newcommand{\N}{\mathrm{N}}
\newtheorem{thm}{Theorem}[section]
\newtheorem{prop}[thm]{Proposition}
\newtheorem{cor}[thm]{Corollary}
\newtheorem{rmk}{Remark}[section]
\newtheorem{hyp}{Hypothesis}
\newcommand{\try}[1]{{\color{purple} [#1]}}
\newcommand\mtrx[1]{\begin{pmatrix} #1 \end{pmatrix}} 
\newcommand\IP[1]{\langle #1\rangle}
\begin{document}
\title{Spectral reciprocity via integral representations}
\author{Ramon M. Nunes} \thanks{This work was partially supported by the DFG-SNF lead agency program grant 200021L-153647.}
\address{
   Ramon M. Nunes // Departamento de Matemática, Universidade Federal do Ceará (UFC). Bloco 914, Campus do Pici // Bloco 914 // CEP 60455-760 Fortaleza - CE, Brasil}
  \email{ramon@mat.ufc.br}

\date{\today}

\begin{abstract}
We prove a spectral reciprocity formula for automorphic forms on $\GL(2)$ over a number field that is remininscent of the one found by Blomer and Khan. Our approach uses period representations of $L$-functions and the language of automorphic representations.
\end{abstract}

\maketitle

\section{Introduction}

In the past few years, some attention has been given to spectral reciprocity formulae. By this we mean an identity of the shape
\begin{equation}\label{SpecRec}
 \sum_{\pi \in \mathcal{F}}\mathcal{L}(\pi)\mathcal{H}(\pi) = \sum_{\pi \in \widetilde{\mathcal{F}}}\widetilde{\mathcal{L}}(\pi)\widetilde{\mathcal{H}}(\pi), 
\end{equation}  
where $\mathcal{F}$ and $\widetilde{\mathcal{F}}$ are families of automorphic representations, $\mathcal{L}(\pi)$ and  $\widetilde{\mathcal{L}}(\pi)$ are certain $L$-values associated to $\pi$, $\mathcal{H}$ and $\widetilde{\mathcal{H}}$ are some weight functions.

The term \emph{spectral reciprocity} first appeared in this context in a paper by Blomer, Li and Miller \cite{BML2019spectral} but such identities have been around at least since Motohashi's formula connecting the fourth moment of the Riemann zeta-function to the cubic moment of L-functions of cusp forms for ${\rm GL}(2)$ ({\em cf.} \cite{Mot1993explicit}).

The more recent results concern the cases where the families $\mathcal{F}$ and $\widetilde{\mathcal{F}}$ are the same or nearly the same. Most commonly, these families are taken to be formed by automorphic representations of $\GL(2)$.

There are at least two reasons that help understand the appeal of such formulae. The first one is that they give a somewhat conceptual way of summarizing a technique often used in dealing with problems on families of $\GL(2)$ $L$-functions in which one uses the Kuznetsov formula on both directions in order to estimate a moment of $L$-values. The second one comes from their satisfying intrinsic nature relating objects that have no \textit{a priori} reason to be linked.

The first versions of these $\GL(2)$ - spectral reciprocity formulae (\citep{BlKh2019reciprocity}, \citep{AK2018level} and \citep{BlKh2019uniform}) used classical techniques such as the Voronoi summation formula and the Kuznetsov formula. Starting from \citep{Zach2019Periods}, it became clear that an adelic approach could be of interest. Not only this renders generalization to number fields almost immediate, it can also avoid some of the combinatorial difficulties that arise when applying the Voronoi formula.

In \cite{BlKh2019reciprocity}, Blomer and Khan have shown a reciprocity formula which is the main inspiration for the present work: Let $\Pi$ be a fixed automorphic representation of ${\rm GL}(3)$ over $\mathbb{Q}$. Let $q$ and $\ell$ be coprime integers. We write

$$
\mathcal{M}(q,\ell;h):=\frac{1}{q}\sum_{\text{cond}(\pi)=q}\frac{L(1/2,\Pi\times\pi)L(1/2,\pi)}{L(1,\text{Ad},\pi)}\frac{\lambda_{\pi}(\ell)}{\ell^{1/2}}h(t_{\pi})+(\cdots),
$$
where
\begin{itemize}
  \item $\pi$ runs over cuspidal automorphic representations of $\PGL(2)$,
  \item $\lambda_{\pi}(\ell)$ is the eigenvalue of the Hecke-operator $T_{\ell}$ on $\pi$,  
  \item $t_\pi$ is the spectral parameter,
  \item $h$ is a {\em fairly general} smooth function and
  \item $(\cdots)$ denotes the contribution of the Eisenstein part, the terms of lower conductor and some degenerate terms.
\end{itemize}

Then, Blomer and Khan have showed that

$$
\mathcal{M}(q,\ell,h)=\mathcal{M}(\ell,q,\check{h}),
$$
where $h\mapsto \check{h}$ is given by an explicit integral transformation. When $\Pi$ corresponds to an Eisenstein series, this has an application to subconvexity: Let $\pi$ be a cuspidal automorphic respresentation for ${\rm GL}(2)$ over $\mathbb{Q}$ of {\em squarefree} conductor, then:
\begin{equation}\label{subconvexity}
L(1/2,\pi)\ll_{\epsilon}(\text{cond} (\pi))^{\frac14-\frac{1-2\vartheta}{24}+\epsilon},
\end{equation}
where $\vartheta$ is an admissible exponent towards the Ramanujan conjecture (we know that $\frac{7}{64}$ is admissible and $\vartheta=0$ corresponds to the conjecture). This was then the best know bound of its kind but it was later superseeded by the one in \cite{BHKM2020MOtohashi}.

In this article we use the theory of adelic automorphic representations and integral representations of Rankin-Selberg $L$-functions to deduce a result on number fields of similar flavor to that of \cite[Theorem 1]{BlKh2019reciprocity}. 

With respect to Blomer and Khan's result, our result has the advantage of being valid for any number field. On the other hand we need to make some technical restrictions that prevent us from having a full generalization of their reciprocity formula. For the moment our results only work when the fixed $\GL(3)$ form is cuspidal and our formula only contemplates forms that are spherical at every infinity place. The first restriction is made for analytic reasons and is due to the fact that unlike cusp forms, the Eisensten series are not of rapid decay. This can probably be resolved by means of a suitable notion of regularized integrals. As for the second restriction, this seems to be of a more representation-theoretic nature. It requires showing analyticity of certain local factors for non-unitary representations of $\GL(2)$. We hope to address both of these technical issues in future work.

\subsection{Statement of results}

Let $F$ be a number field, with ring of integers $\mf{o}_F$. Let $\Pi$ be a cuspidal automorphic representation of $\GL(3)$ over $F$.
%
%
For each automorphic representation $\pi$ of $\GL(2)$, we consider the completed $L-$functions
$$
\Lambda(s,\pi),\;\Lambda(s,\operatorname{Ad},\pi)\text{ and }\Lambda(s,\Pi\times\pi).
$$
These are, respectively, the Hecke $L-$function and the Adjoint $L-$function of $\pi$, and the Rankin-Selberg $L-$function of $\Pi \times \pi$, where for the Rankin-Selberg $L$-functions we take the naive definition \eqref{L-naive}. These coincide with the local $L$-functions à la Langlands at all the unramified places but might differ at the ramified ones. Notice that this might also affect the values of $L(s,\mathrm{Ad},\pi)$. 

Let $\xi_F$ denote the completed Dedekind zeta function of $F$ and let $\xi_F^{\ast}(1)$ denote its residue at $1$. Let $\Phi\simeq\otimes_v\Phi_v$ be a vector in the representation space of $\Pi$. Let $s$ and $w$ be complex numbers and let $H$ denote the weight function given by \eqref{H-def}. We consider the following sums 

$$
\mc{C}_{s,w}(\Phi):=\sum_{\pi\in C(S)}\frac{\Lambda(s,\Pi\times \pi)\Lambda(w,\pi)}{\Lambda(1,\operatorname{Ad},\pi)}H(\pi)
$$
and
\begin{equation}\label{E(H)-def}
  \mc{E}_{s,w}(\Phi):=\sum_{\omega\in \Xi(S)}\int_{-\infty}^{\infty}\frac{\Lambda(s,\Pi\times \pi(\omega, it))\Lambda(w,\pi(\omega,it))}{\Lambda^{\ast}(1,\operatorname{Ad},\pi(\omega,it))}H(\pi(\omega,it))\frac{\d t}{2\pi},
\end{equation}
where $S$ is any finite set of places containing all the archimedean ones and those for which $\Phi_v$ is ramified, $C(S)$ (resp. $\Xi(S)$) denotes the collection of cuspidal automorphic representations of $\GL(2)$ (resp. unitary normalized idele characters) over $F$ that are unramified everywhere outside $S$. Finally, $\pi(\omega,it)$ denotes a normalized induced representation as in \S \ref{Ind-Eis} and $\Lambda^{\ast}(1,\operatorname{Ad},\pi)$ denotes the first non-zero Laurent coefficient of $\Lambda(s,\operatorname{Ad},\pi)$ at $s=1$. The main object of study in this work is the following ``moment'':
\begin{equation}\label{M=C+E}
  \mc{M}_{s,w}(\Phi):=\mc{C}_{s,w}(\Phi)+\mc{E}_{s,w}(\Phi).
\end{equation}

We remark that the values of $\Lambda(s,\Pi\times \pi(\omega,it))$, $\Lambda(w,\pi(\omega,it))$ and  $\Lambda^{\ast}(1,\operatorname{Ad}, \pi(\omega,it))$ can be given in terms of simpler $L-$functions as follows:

\begin{align*}
\Lambda(s,\Pi\times \pi(\omega,it))&=\Lambda(s+it,\Pi\times \omega)\Lambda(s-it,\Pi\times \overline{\omega})\\
\Lambda(w,\pi(\omega,it))&=\Lambda(w+it,\omega)\Lambda(w-it,\overline{\omega}),\\
\Lambda^{\ast}(1,\operatorname{Ad},\pi(\omega,it))&=\operatorname{Res}_{s=1}\left[\Lambda(s+2it,\omega^2)\Lambda(s-2it,\overline{\omega}^2) \xi_F(s)\right]\\
                                                  &=\Lambda(1+2it,\omega^2)\Lambda(1-2it,\overline{\omega}^2) \xi^{\ast}_F(1),\,\,(t\neq 0).
\end{align*}
where $\Lambda(s,\Pi\times \omega)$ and $\Lambda(s,\omega)$ are the (completed) Rankin-Selberg $L$-function of $\Pi\times \omega$ and Dirichlet $L$-function of $\omega$, respectively.

We start with the following result which can be seen as a preliminary reciprocity formula.

\begin{thm}\label{main}
Let $s,w\in\Cc$ and define

\begin{equation}\label{s'w'}
(s',w'):=\left(\frac{1+w-s}{2},\frac{3s+w-1}{2}\right).
\end{equation}
Let $H$ be as in \eqref{H-def} and $\widecheck{H}$ be given by \eqref{H-check}. Suppose the real parts of $s$, $w$, $s'$ and $w'$ are sufficiently large. Then we have the relation

$$
\mc{M}_{s,w}(\Phi)+\mathcal{D}_{s,w}(\Phi)=\mc{M}_{s',w'}(\widecheck{\Phi})+\mathcal{D}_{s',w'}(\widecheck{\Phi}),
$$ 
where $\mathcal{D}_{s,w}(\Phi)$ is given by \eqref{D-def}.
\end{thm}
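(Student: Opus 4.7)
The plan is to exhibit $\mc{M}_{s,w}(\Phi)+\mc{D}_{s,w}(\Phi)$ as a global pairing on $\GL(2)(F)\backslash\GL(2)(\Aa)$, then to recognize that pairing as coming from $\Phi$ via a $\GL(3)$-period that is symmetric under a Weyl element, which is what produces the reciprocity. The working region (all of $\operatorname{Re}(s), \operatorname{Re}(w), \operatorname{Re}(s'), \operatorname{Re}(w')$ large) guarantees that every integral in sight is absolutely convergent, so analytic continuation can be postponed.

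For each cuspidal $\pi$ I would use the Jacquet--Piatetski-Shapiro--Shalika integral to realize $\Lambda(s,\Pi\times\pi)$ as a Rankin--Selberg pairing between the Whittaker function of $\Phi$ (restricted to the $\GL(2)$-block inside $\GL(3)$) and the Whittaker function of some $\varphi\in\pi$, weighted by $|\det|^{s-1/2}$; the Hecke integral realizes $\Lambda(w,\pi)$ as the torus period of a second vector $\varphi'\in\pi$ against $|\cdot|^{w-1/2}$; and the denominator $\Lambda(1,\operatorname{Ad},\pi)$ absorbs the Petersson/Plancherel normalization in the Michel--Venkatesh style. In combination the summand becomes a product of two $\GL(2)$-pairings $\IP{F_1, \varphi}\IP{\varphi, F_2}$, where $F_1$ and $F_2$ are explicit automorphic forms on $\GL(2)(\Aa)$ constructed from $\Phi$, $s$, $w$, and the local data underlying $H$; the same construction using normalized induced realizations of $\pi(\omega, it)$ handles $\mc{E}_{s,w}(\Phi)$ uniformly. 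Applying the Plancherel formula for $L^2(\GL(2)(F)\backslash\GL(2)(\Aa))$ collapses the cuspidal sum and the Eisenstein integral into the single pairing $\IP{F_1, F_2}_{\GL(2)}$; the remaining Plancherel contributions (constants and residues from the Eisenstein spectrum) provide the degenerate term $\mc{D}_{s,w}(\Phi)$, which is exactly why it must accompany $\mc{M}_{s,w}(\Phi)$ in the statement.

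With $\mc{M}_{s,w}(\Phi)+\mc{D}_{s,w}(\Phi)$ so identified, the reciprocity follows from the observation that $F_1$ and $F_2$ themselves arise from $\Phi$ via an Eisenstein-type construction on $\GL(3)$, and that the symmetry swapping their roles is implemented by a Weyl element of $\GL(3)$. On parameters this Weyl element acts by the involution $(s,w)\mapsto(s',w')$ recorded in \eqref{s'w'}; on the vector it realizes the local integral transform $\Phi\mapsto\widecheck{\Phi}$ place by place, which on the $H$-side agrees with \eqref{H-check}. Running the integral-representation argument in reverse starting from $(s', w', \widecheck{\Phi})$ then recognizes the same pairing as $\mc{M}_{s',w'}(\widecheck{\Phi})+\mc{D}_{s',w'}(\widecheck{\Phi})$, proving the theorem. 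The hardest step will be the first one: checking at ramified and archimedean places that the summand really collapses to a clean bilinear pairing of $\GL(2)$-forms, and, tied to this, isolating $\mc{D}_{s,w}$ so that the boundary contributions from Plancherel and from the Weyl-element symmetry match on both sides with no extra debris left over.
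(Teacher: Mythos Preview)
Your plan is the paper's plan in outline, but two of the identifications you make are off and would trip you up if you tried to execute it as written.

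First, there is no second $\GL(2)$-form $F_2$ and no Plancherel collapse of $\sum_\varphi\IP{F_1,\varphi}\IP{\varphi,F_2}$. The object being spectrally decomposed is a \emph{single} automorphic form on $\GL(2)$, namely
\[
\mathcal{A}_s\Phi(h)=|\det h|^{s-\frac12}\int_{F^\times\backslash\Aa^\times}\Phi\begin{pmatrix}z(u)h&\\&1\end{pmatrix}|u|^{2s-1}\,\d^\times u,
\]
and the quantity $\mathcal{M}_{s,w}(\Phi)+\mathcal{D}_{s,w}(\Phi)$ is (up to a harmless power of $d_F$) its torus period $I(w,\mathcal{A}_s\Phi)=\int_{F^\times\backslash\Aa^\times}\mathcal{A}_s\Phi(a(y))|y|^{w-\frac12}\,\d^\times y$. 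One applies the spectral theorem to $\mathcal{A}_s\Phi$ alone, then integrates each term over the torus; the inner products $\IP{\mathcal{A}_s\Phi,\phi}$ unfold to the $\GL(3)\times\GL(2)$ Rankin--Selberg integrals, and the torus period of each $\phi$ gives the Hecke $L$-function. Trying to cast the Hecke side as $\IP{\varphi,F_2}_{\GL(2)}$ with $F_2\in L^2$ does not work, since $|y|^{w-1/2}$ is not square-integrable.

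Second, $\mathcal{D}_{s,w}(\Phi)$ is not a residual or one-dimensional Plancherel contribution. Those representations are non-generic and drop out once you pass to Whittaker functions. Rather, $\mathcal{D}_{s,w}(\Phi)=I\bigl(w,(\mathcal{A}_s\Phi)_0\bigr)$ is the torus period of the \emph{constant term} of $\mathcal{A}_s\Phi$, and it appears because the Whittaker period $\Psi(w,W_{\mathcal{A}_s\Phi})$ only sees non-constant Fourier coefficients, so $I(w,\mathcal{A}_s\Phi)=\Psi(w,W_{\mathcal{A}_s\Phi})+I(w,(\mathcal{A}_s\Phi)_0)$.

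Two smaller points. The Weyl symmetry is not mediated by any Eisenstein construction on $\GL(3)$: it is the bare identity $\Phi\bigl(\begin{smallmatrix}uy&&\\&u&\\&&1\end{smallmatrix}\bigr)=\widecheck{\Phi}\bigl(\begin{smallmatrix}y&&\\&u^{-1}&\\&&1\end{smallmatrix}\bigr)$, applied directly inside the double torus integral $I(w,\mathcal{A}_s\Phi)$ before any spectral expansion; the change of variables $(u,y)\mapsto(u^{-1},uy)$ then yields $(s',w')$. And for this theorem no local checking at ramified or archimedean places is needed; the factorization into local zeta integrals is the general one from Jacquet--Piatetski-Shapiro--Shalika, and the explicit local work you anticipate belongs to Theorem~\ref{Spherical}, not here.
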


Theorem \ref{main} is a completely symmetrical formula but only holds when the real part of the parameters $s$, $w$, $s'$ and $w'$ are sufficiently large. In order to obtain a formula that also holds at the central point $s=w=s'=w'=\frac12$, we need to analytically continue the term $\mc{E}_{s,w}(\Phi)$. This is done in section \ref{analytic-continuation} under a technical condition enclosed in Hypothesis \ref{hypothesis-meromorphic}.

\subsection*{Spectral Reciprocity at the central point}

Let $\Pi$ be an everywhere unramified cuspidal automorphic representation for $\GL(3)$ over $F$. This means that $\Pi\simeq \otimes_v'\Pi_v$, where for each $v$, $\Pi_v$ is isomorphic to the isobaric sum
$$
|\cdot|_v^{it_{1,v}}\boxplus|\cdot|_v^{it_{2,v}}\boxplus|\cdot|_v^{it_{3,v}}.
$$
We say that $\Pi$ is $\theta$-tempered if for all $v$ and $i=1,2,3$, we have $|\Ree(t_{i,v})|\leq \theta$. It follows from a result of Luo, Rudnick and Sarnak \cite{LRS1999ramanujan} that every automorphic representation of $\GL(n)$ is $\theta$-tempered for some $\theta<1/2$. Therefore we can, and will, let $\theta=\theta(\Pi)<1/2$ be such that $\Pi$ is $\theta$-tempered.

Suppose that $\Phi_v$ is spherical for every archimedean place $v$. The reason for this restriction is twofold. The first and main one is because this leads to weight functions satisfying Hypothesis \ref{hypothesis-meromorphic}. The second one is that this trivializes the transformation $H_v\to \check{H}_v$ on the local archimedean weights. It would be very interesting to have a better understanding of this transformations. In particular it would be interesting to have an understanding of $\check{H}_v$ when is $H_v$ is taken to be a bump function selecting spectral parameters of a  certain size.  

Let $s, w\in \Cc$ let $\mf{q}$ and $\mf{l}$ be coprime ideals and write $U_{\infty}:=\prod_{v\mid \infty}\{y\in F_v^{\times};|y_v|=1\}$. Suppose that $\Phi=\Phi^{\mf{q},\mf{l}}$ is the choice of vector given in Section \ref{local-comp}. It follows from \eqref{Hv-for-l}, Proposition \ref{prop-local-comp} and Proposition \ref{Archimedean calculation}, that we have
$$ 
H(\pi)= \delta_{\infty}(\pi)\, \frac{\widehat{\lambda}_{\pi}(\mf{l},w)}{(N\mf{l})^w} \,\frac{\varphi(N\mathfrak{q})}{(N\mf{q})^2}h_{\mf{q}}(s,w;\Pi,\pi),
$$
where $\delta_{\infty}(\pi)$ is the characteristic function of representations that are unramified at every archimedean place, $\widehat{\lambda}_{\pi}(\mf{l},w)$ are modified Hecke eigenvalues given by
\begin{equation}\label{lambda-hat}
\widehat{\lambda}_{\pi}(\mf{l},w):=\sum_{\mf{a}\mf{b}=\mf{l}}\frac{\mu(a)}{(N\mf{a})^w}\lambda_{\pi}(\mf{b})=\prod_{\substack{\mf{p}^{n_{\mf{p}}}\mid\mid \mf{l}\\ n_{\mf{p}}\geq 1}}\left(\lambda_{\pi}(\mf{p}^{n_{\mf{p}}})-\frac{\lambda_{\pi}(\mf{p}^{n_{\mf{p}}-1})}{(N\mf{p})^w}\right),
\end{equation}
and, finally, $h_{\mf{q}}(s,w;\Pi,\pi)=1$ if $\operatorname{cond}(\pi)=\mf{q}$,  $h_{\mf{q}}(s,w;\Pi,\pi)\ll (N\mf{q})^{\theta+\epsilon}$ if $\operatorname{cond}(\pi)\mid \mathfrak{q}$ and vanishes otherwise. Thus, choosing $\Phi$ as above, we get

$$
\mc{M}_{s,w}(\Phi)=\mc{M}_0(\Pi,s,w,\mf{q},\mf{l}),
$$
where

$$
\mc{M}_0(\Pi,s,w,\mf{q},\mf{l}):=\mc{C}_0(\Pi,s,w,\mf{q},\mf{l})+\mc{E}_0(\Pi,s,w,\mf{q},\mf{l}),
$$
with
$$
\mc{C}_0(\Pi,s,w,\mf{q},\mf{l})=\frac{\varphi(N\mathfrak{q})}{(N\mf{q})^2}\sum_{\substack{\pi\text{ cusp}^0\\\operatorname{cond}(\pi)\mid \mf{q}}}\frac{\Lambda(s,\Pi\times \pi)\Lambda(w,\pi)}{\Lambda(1,\operatorname{Ad},\pi)}\frac{\widehat{\lambda}_{\pi}(\mf{l},w)}{(N\mf{l})^w}h_{\mf{q}}(s,w;\Pi,\pi)
$$
and
\begin{align*}
  \mc{E}_0(\Pi,s,w,\mf{q},\mf{l})=\frac{\varphi(N\mathfrak{q})}{(N\mf{q})^2}\sum_{\substack{\omega\in\widehat{F^{\times}U_{\infty}\bs\Aa^{\times}_{(1)}}\\\operatorname{cond}(\omega)^2\mid \mf{q}}}\int_{-\infty}^{\infty}\frac{\Lambda(s,\Pi\times \pi(\omega,it))\Lambda(w,\pi(\omega,it))}{\Lambda^{\ast}(1,\operatorname{Ad},\pi(\omega,it))}\\
\times \frac{\widehat{\lambda}_{\pi(\omega,it)}(\mf{l},w)}{(N\mf{l})^w}h_{\mf{q}}(s,w;\Pi,\pi(\omega,it))\frac{\d t}{2\pi},
\end{align*}
The notation $\text{cusp}^0$ denotes that we are restricting to forms that are unramified at every archimedean place and the analogous role in the Eisenstein part is played by quotienting by $U_{\infty}$. Finally, we let 
$$
\mc{N}_0(\Pi,s,w,\mf{q},\mf{l}):=\mathcal{D}_{s',w'}(\widecheck{\Phi})+\mc{R}_{s',w'}(\widecheck{\Phi})- \mathcal{D}_{s,w}(\Phi)-\mc{R}_{s,w}(\Phi),
$$
where $\mathcal{D}$ is given by \eqref{D-def} and $\mc{R}$ is given by \eqref{def-R}.

\begin{thm}\label{Spherical}
Let $\Pi$ be an everywhere unramified cuspidal automorphic representation of $\GL(3)$ over $F$. Suppose $\mf{q}$ and $\mf{l}$ are coprime ideals and that $\frac12\leq \Ree(s)\leq \Ree(w)< \frac34$. Then we have
$$
\mc{M}_0(\Pi,s,w,\mf{q},\mf{l})=\mc{N}_0(\Pi,s,w,\mf{q},\mf{l})+\mc{M}_0(\Pi,s',w',\mf{l},\mf{q}),
$$
where $s',w'$ are as in \eqref{s'w'}. Moreover, in this same region, $\mathcal{N}_0$ satisfies
\begin{equation}\label{ineq-for-N0}
  \mc{N}_0(\Pi,s,w,\mf{q},\mf{l})\ll_{s,w,\epsilon}\min(N\mathfrak{q},N\mathfrak{l})^{\theta-1+\epsilon}.
\end{equation}
\end{thm}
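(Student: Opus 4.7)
The plan is to derive Theorem \ref{Spherical} from Theorem \ref{main} applied to the specific test vector $\Phi=\Phi^{\mf{q},\mf{l}}$, followed by analytic continuation from the region of large real parts down to the strip $1/2\leq\Re(s)\leq\Re(w)<3/4$. First, for sufficiently large $\Re(s), \Re(w), \Re(s'), \Re(w')$, Theorem \ref{main} gives the identity
$$\mc{M}_{s,w}(\Phi^{\mf{q},\mf{l}})+\mathcal{D}_{s,w}(\Phi^{\mf{q},\mf{l}})=\mc{M}_{s',w'}(\widecheck{\Phi^{\mf{q},\mf{l}}})+\mathcal{D}_{s',w'}(\widecheck{\Phi^{\mf{q},\mf{l}}}).$$
Using the local computation in Section \ref{local-comp}, together with Proposition \ref{prop-local-comp} and Proposition \ref{Archimedean calculation}, the left-hand side unfolds into $\mc{M}_0(\Pi,s,w,\mf{q},\mf{l})+\mathcal{D}_{s,w}(\Phi^{\mf{q},\mf{l}})$. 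The key step is then to verify that the dual vector $\widecheck{\Phi^{\mf{q},\mf{l}}}$ behaves, at all finite places, like $\Phi^{\mf{l},\mf{q}}$, so that the right-hand side identifies with $\mc{M}_0(\Pi,s',w',\mf{l},\mf{q})+\mathcal{D}_{s',w'}(\widecheck{\Phi^{\mf{q},\mf{l}}})$. At archimedean places the spherical assumption makes $\check H_v=H_v$ trivially, so no extra transformation is introduced there.

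Next, I would analytically continue both sides in $(s,w)$. The cuspidal part $\mc{C}_{s,w}$ is an absolutely convergent sum at every point in the strip because of the rapid decay of $\Lambda(w,\pi)$ in the spectral aspect and the polynomial bounds on $\Lambda(s,\Pi\times\pi)$, so it extends holomorphically without difficulty. The nontrivial continuation is that of the Eisenstein piece $\mc{E}_{s,w}$, which is carried out in Section \ref{analytic-continuation} under Hypothesis \ref{hypothesis-meromorphic}. Moving the $t$-contour produces residual terms coming from the poles of $\Lambda(s\pm it,\Pi\times\omega)$, $\Lambda(w\pm it,\omega)$ and $\Lambda(1\pm 2it,\omega^{2})^{-1}$ as $(s,w)$ is shifted; these are precisely packaged into $\mc{R}_{s,w}(\Phi)$ as defined in \eqref{def-R}. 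Performing the continuation symmetrically on both sides of the preliminary reciprocity formula, and rearranging, yields
$$\mc{M}_0(\Pi,s,w,\mf{q},\mf{l})=\bigl[\mathcal{D}_{s',w'}(\widecheck{\Phi})+\mc{R}_{s',w'}(\widecheck{\Phi})-\mathcal{D}_{s,w}(\Phi)-\mc{R}_{s,w}(\Phi)\bigr]+\mc{M}_0(\Pi,s',w',\mf{l},\mf{q}),$$
which is the claimed identity since the bracket is $\mc{N}_0$.

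It remains to prove the bound $\mc{N}_0\ll\min(N\mf{q},N\mf{l})^{\theta-1+\epsilon}$. By the symmetry $(\mf{q},\mf{l})\leftrightarrow(\mf{l},\mf{q})$ built into the reciprocity, it suffices to estimate $\mathcal{D}$ and $\mc{R}$ for the pair $(\mf{q},\mf{l})$ with $N\mf{q}\leq N\mf{l}$ and appeal to the analogous estimate for $(\mf{l},\mf{q})$. The degenerate term $\mathcal{D}_{s,w}(\Phi^{\mf{q},\mf{l}})$ from \eqref{D-def} is essentially a residue coming from the rank-one Eisenstein series and, after the explicit local computations, produces a factor $\varphi(N\mf{q})/(N\mf{q})^{2}$ multiplied by a divisor-type sum over $\mf{l}$ bounded in terms of $\theta$ via the $\theta$-temperedness of $\Pi$, yielding the required power saving. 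The residual term $\mc{R}_{s,w}$ is analyzed similarly, now using that each residue at $t=t_0$ contributes values of $\Lambda(\cdot,\Pi\times\omega)$ and $\Lambda(\cdot,\omega)$ on the critical line, which are bounded by convexity, while the local factors supported on $\mf{q}$ again produce $(N\mf{q})^{\theta-1+\epsilon}$ by the same local computation as in \eqref{Hv-for-l}.

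The main obstacle is the careful bookkeeping of the residues $\mc{R}$ in the analytic continuation: one must track which poles of $\Lambda(s\pm it,\Pi\times\omega)$, $\Lambda(w\pm it,\omega)$ and of $1/\Lambda^{\ast}(1,\operatorname{Ad},\pi(\omega,it))$ are crossed when moving $t$ across the real line for $(s,w)$ in the critical strip, and to verify that Hypothesis \ref{hypothesis-meromorphic} ensures the resulting contour integrals converge. Once this is done, the bound on $\mc{N}_0$ reduces to standard convexity estimates for Hecke and Rankin-Selberg $L$-functions together with the local volume factor $\varphi(N\mf{q})/(N\mf{q})^{2}$, which supplies the decisive $(N\mf{q})^{-1+\epsilon}$; the extra $(N\mf{q})^{\theta}$ is absorbed from the bound on $h_{\mf{q}}(s,w;\Pi,\pi)$ for representations of conductor properly dividing $\mf{q}$.
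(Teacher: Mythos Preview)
Your overall strategy is correct and matches the paper's: start from Theorem~\ref{main} (equivalently, the period identity $I(w,\mc{A}_s\Phi)=I(w',\mc{A}_{s'}\widecheck{\Phi})$), verify that $\widecheck{\Phi^{\mf{q},\mf{l}}}=\Phi^{\mf{l},\mf{q}}$ at all places (which is a one-line computation using $w_{23}\left(\begin{smallmatrix}1&&\beta\\&1&\\&&1\end{smallmatrix}\right)w_{23}^{-1}=\left(\begin{smallmatrix}1&\beta&\\&1&\\&&1\end{smallmatrix}\right)$ and sphericality of $\Phi^0_v$), then continue each side using Proposition~\ref{analytic-continuation-prop} to produce the residual terms $\mc{R}$.

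A few details are misstated and should be corrected. First, since $\Pi$ is cuspidal, $\Lambda(s\pm it,\Pi\times\omega)$ is entire, and $1/\Lambda(1\pm 2it,\omega^2)$ has no poles near $\Ree=1$; the \emph{only} poles crossed in the contour shift are those of $\xi_F(w\pm it)$ at $w\pm it=1$ for $\omega=\mathbf{1}$, exactly as in the proof of Proposition~\ref{analytic-continuation-prop}. Second, your description of the bound for $\mathcal{D}_{s,w}(\Phi^{\mf{q},\mf{l}})$ is off: it is not ``essentially a residue from a rank-one Eisenstein series'' and does not carry a factor $\varphi(N\mf{q})/(N\mf{q})^2$. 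Rather, by \eqref{eulerian-for-d} it factors into local integrals (Bump's double Dirichlet series), the averaging at $v\mid\mf{q}$ has no effect on diagonal Whittaker values, and the saving in $\mathcal{D}_{s,w}(\Phi)$ comes instead from the places $v\mid\mf{q}$ after invoking the symmetry with $\mathcal{D}_{s',w'}(\widecheck{\Phi})$, yielding $(N\mf{q})^{\theta-\Ree(s+w)+\epsilon}$ as in Proposition~\ref{degenerate-prop}. Third, the local saving in $\mc{R}_{s,w}(\Phi)$ at $v\mid\mf{q}$ comes from Proposition~\ref{meromorphic-t-variable} (the bound $D_v(s,w)\ll p_v^{n(\theta-1+\epsilon)}$), not from \eqref{Hv-for-l}, which pertains to $v\mid\mf{l}$. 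With these corrections, the combination of Propositions~\ref{degenerate-prop} and~\ref{meromorphic-t-variable} (together with the explicit form \eqref{def-R}) gives \eqref{ineq-for-N0} exactly as in Section~\ref{conclusion}.
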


As an application, We may deduce a non-vanishing result which is similar in spirit to \citep[Theorem 1.2]{Khan2012Simultaneous}, where we prove an asymptotic formula for a family of forms of prime level $\mf{p}$ and let $N\mf{p}$ tend to infinity. It may be worth mentioning that although the results are similar, Khan's result concerns modular forms of sufficiently large weight $k$ for $\GL(2)$ over the field of rationals, while our result holds for everywhere unramified forms for $\GL(2)$ over an arbitrary number field.


\begin{cor}\label{non-vanishing}
Let $\Pi$ be an unramified cuspidal automorphic representation of $\GL(3)$ over $F$ and let $\mf{p}$ be a prime ideal of $\mf{o}_F$. Then for every $\epsilon>0$, we have

$$
\frac{\varphi(N\mf{p})}{(N\mf{p})^2}\sum_{\substack{\pi\text{ cusp}^0\\\operatorname{cond}(\pi)=\mf{p}}}\frac{\Lambda(\frac12,\Pi\times \pi)\Lambda(\frac12,\pi)}{\Lambda(1,\operatorname{Ad},\pi)}=\frac{4\Lambda(1,\Pi)\Lambda(0,\Pi)}{\xi_F(2)}+O_{\epsilon}\left((N\mf{p})^{\vartheta-\frac12+\epsilon}\right).
$$
In particular, for $N\mf{p}$ sufficiently large, there is at least one automorphic representation $\pi$ of conductor $\mf{p}$, unramified for every archimedean place and such that $\Lambda(\frac12,\Pi\times \pi)$ and $\Lambda(\frac12,\pi)$ are both non-zero.
\end{cor}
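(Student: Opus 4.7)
The plan is to specialise Theorem \ref{Spherical} to $s=w=\tfrac12$, $\mf{q}=\mf{p}$, and $\mf{l}=\mf{o}_F$, in which case $(s',w')=(\tfrac12,\tfrac12)$ and the reciprocity identity reads
\[
\mc{M}_0(\Pi,\tfrac12,\tfrac12,\mf{p},\mf{o}_F) = \mc{N}_0(\Pi,\tfrac12,\tfrac12,\mf{p},\mf{o}_F) + \mc{M}_0(\Pi,\tfrac12,\tfrac12,\mf{o}_F,\mf{p}).
\]
On the left, $\mc{C}_0$ splits according to whether $\operatorname{cond}(\pi)=\mf{p}$ (where $h_\mf{p}\equiv 1$, giving exactly the target sum of the corollary) or $\operatorname{cond}(\pi)=\mf{o}_F$ (a finite sum controlled by $h_\mf{p}\ll (N\mf{p})^{\theta+\epsilon}$ together with the prefactor $\varphi(N\mf{p})/(N\mf{p})^2$, producing $O((N\mf{p})^{\theta-1+\epsilon})$). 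The Eisenstein piece $\mc{E}_0$ on the left only sees unramified $\omega$, since $\operatorname{cond}(\omega)^2\mid\mf{p}$ and $\mf{p}$ is squarefree; estimating the $t$-integral trivially on the critical line gives $O((N\mf{p})^{-1+\epsilon})$.

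The dual term $\mc{M}_0(\Pi,\tfrac12,\tfrac12,\mf{o}_F,\mf{p})$ is where the reciprocity supplies the saving $(N\mf{p})^{-1/2}$. Its cuspidal part is a finite sum over everywhere-unramified cusp forms, each weighted by $\widehat{\lambda}_\pi(\mf{p},\tfrac12)/(N\mf{p})^{1/2} \ll (N\mf{p})^{\vartheta-1/2+\epsilon}$; its Eisenstein part is bounded by $O((N\mf{p})^{-1/2+\epsilon})$ via a direct estimate on the critical line using $|\widehat{\lambda}_{\pi(\omega,it)}(\mf{p},\tfrac12)| \ll 1$ for $t\in\Rr$. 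All these contributions fit into the announced error, since $\theta<\tfrac12$ gives $\theta-1<\vartheta-\tfrac12$.

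This forces the main term to arise entirely from $\mc{N}_0(\Pi,\tfrac12,\tfrac12,\mf{p},\mf{o}_F)$, whose bound \eqref{ineq-for-N0} only ensures $\mc{N}_0=O(1)$ in this regime. An explicit computation from the definition $\mc{N}_0=\mc{D}_{s',w'}(\widecheck\Phi)+\mc{R}_{s',w'}(\widecheck\Phi)-\mc{D}_{s,w}(\Phi)-\mc{R}_{s,w}(\Phi)$ is required. Because $\Pi$ is cuspidal, the only surviving residues come from the trivial character $\omega=1$ in the Eisenstein spectrum: setting $\alpha=it$, the simple poles of $\xi_F(\tfrac12+\alpha)$ and $\xi_F(\tfrac12-\alpha)$ at $\alpha=\pm\tfrac12$ combine with the compensating simple zero of $1/\xi_F(1\mp 2\alpha)$ to leave a simple pole whose residue evaluates to a multiple of $\Lambda(1,\Pi)\Lambda(0,\Pi)/\xi_F(2)$, the factor $\xi_F^*(1)$ in the numerator cancelling against the adjoint normalisation. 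A careful bookkeeping of the four $\mc{R}$-terms and of the two symmetric poles $\alpha=\pm\tfrac12$ yields the announced constant $4\Lambda(1,\Pi)\Lambda(0,\Pi)/\xi_F(2)$, while the degenerate terms $\mc{D}$ are checked to contribute only to the error.

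The main obstacle is precisely this explicit residue computation: one must carefully track the cancellations between the double pole of $\xi_F(\tfrac12+\alpha)\xi_F(\tfrac12-\alpha)$ and the simple zero of $1/\xi_F(1-2\alpha)$ (respectively $1/\xi_F(1+2\alpha)$) near $\alpha=\pm\tfrac12$, in order to produce a clean simple pole free of logarithms. Once the asymptotic is in hand, the non-vanishing statement is immediate: $\Lambda(1,\Pi)\neq 0$ by Jacquet--Shalika and $\Lambda(0,\Pi)=\varepsilon(\Pi)\Lambda(1,\widetilde\Pi)\neq 0$ by the functional equation, so the main term is nonzero; for $N\mf{p}$ sufficiently large it dominates the error, forcing at least one summand to satisfy $\Lambda(\tfrac12,\Pi\times\pi)\Lambda(\tfrac12,\pi)\neq 0$.
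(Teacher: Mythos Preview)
Your overall strategy—specialising Theorem \ref{Spherical} at $s=w=\tfrac12$, $\mf{q}=\mf{p}$, $\mf{l}=\mf{o}_F$ and reading off the main and error terms—is exactly what the paper does. However, your allocation of the main term is wrong in a way that, if carried through, would leave you short by a factor of $2$.

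You assert that ``the degenerate terms $\mc{D}$ are checked to contribute only to the error'' and that the full constant $4\Lambda(1,\Pi)\Lambda(0,\Pi)/\xi_F(2)$ comes from the residue terms $\mc{R}$. This is not so. With $\mf{l}=\mf{o}_F$, the bound in Proposition~\ref{degenerate-prop} for $\mc{D}_{s',w'}(\widecheck{\Phi})$ reads $\ll (N\mf{l})^{\theta-1+\epsilon}=O(1)$, so it gives no decay whatsoever; and indeed \eqref{D-central} computes it \emph{exactly} as $2d_F^{3/2}\Lambda(1,\Pi)\Lambda(1,\overline{\Pi})/\xi_F(2)=2\Lambda(1,\Pi)\Lambda(0,\Pi)/\xi_F(2)$. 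This supplies half of the main term. The other half comes from $\mc{R}_{s',w'}(\widecheck{\Phi})$: by \eqref{def-R} and the observation that $\widehat{\lambda}_{\pi(\mathbf{1},\pm 1/2)}(\mf{q},\tfrac12)=1$ for every $\mf{q}$, one gets $\mc{R}(\widecheck{\Phi})=2\Lambda(1,\Pi)\Lambda(0,\Pi)/\xi_F(2)$. The remaining two pieces $\mc{D}_{s,w}(\Phi)$ and $\mc{R}_{s,w}(\Phi)$ are genuine error terms of size $(N\mf{p})^{\theta-1+\epsilon}$, because on that side the weight $H$ carries the factor $\varphi(N\mf{p})/(N\mf{p})^2\cdot h_\mf{p}$ (cf.\ Proposition~\ref{meromorphic-t-variable} and Proposition~\ref{degenerate-prop}). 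So of the four residues you mention, only the two in $\mc{R}(\widecheck{\Phi})$ contribute to the main term, and they give $2$, not $4$; the missing $2$ is precisely $\mc{D}(\widecheck{\Phi})$.

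A secondary point: the sums over everywhere-unramified cuspidal $\pi$ (appearing both in the lower-conductor piece on the left and in the dual moment $\mc{M}_0(\Pi,\tfrac12,\tfrac12,\mf{o}_F,\mf{p})$) are \emph{not} finite—there are infinitely many spherical Maass forms of trivial level. The paper packages these into a quantity $\mc{M}^\ast$ and proves $\mc{M}^\ast\ll 1$ via Stirling's formula (exponential decay of the archimedean ratio $L(\tfrac12,\Pi_v\times\pi_v)L(\tfrac12,\pi_v)/L(1,\mathrm{Ad},\pi_v)$ in the spectral parameter) together with the Weyl law. You should include this step rather than assert finiteness.
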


\subsection*{Plan of the paper}
In Section \ref{notations} we lay down our first conventions on number fields and local fields. In Section \ref{preliminaries} we recall the notion of automorphic representations for $\GL(n)$ over $F$ and some of its properties. Special attention is given to the case $n=2$ where, in particular, we recall the construcion of Eisenstein series and write down an explicit spectral decomposition. In Section \ref{Whittaker-section}, we introduce the Whittaker models and their relation to periods of Rankin-Selberg $L$-functions for $\GL(n+1)\times \GL(n)$. We work in complete generality but only use the results in the cases $n=1$ and $n=2$.

In Section \ref{Abstract-reciprocity} we prove an identity between periods which we call abstract reciprocity. This is connected to the actual reciprocity via a spectral decomposition which is performed in Section \ref{period-expansion}. In Section \ref{local-comp} we make some explicit computation for the local weights. Section \ref{degenerate-sec} is dedicated to analysing the degenerate term $\mathcal{D}_{s,w}(\Phi)$ and we show the meromorphic continuation of the spectral moment in Section \ref{analytic-continuation}, thus introducing the term $\mathcal{R}_{s,w}(\Phi)$. Theorem \ref{main} only uses the results up to Section \ref{period-expansion} and few observations from Section \ref{degenerate-sec}. On the other hand, Theorem \ref{Spherical} requires the full power of the results in Sections \ref{local-comp}, \ref{degenerate-sec} and \ref{analytic-continuation} and its proof is given in Section \ref{conclusion} along with that of Corollary \ref{non-vanishing}.

\section*{Acknowledgements}

This work benefited from discussions I had with many people including Philippe Michel, Rapha\"el Zacharias and Han Wu. I take this opportunity to thank them. I am also indebted to Subhajit Jana for pointing out a mistake in an earlier version of this paper. Finally, I thank the anonymous referee for his many remarks and corrections that greatly improved the quality of the text.
\section{Notations}\label{notations}

\subsection*{Number fields and completions}
Throughout the paper, $F$ will denote a fixed number field with ring of intergers $\mf{o}_F$ and discriminant $d_F$. For $v$ a place of $F$, we let $F_v$ be the completion of $F$ at the place $v$. If $v$ is non-archimedean, we write $\mf{o}_v$ for the ring of integers in $F_v$, $\mf{m}_v$ for its maximal ideal and $\varpi_v$ for its uniformizer. The adele ring of $F$ is denoted by $\Aa$, its unit group is denoted by $\Aa^{\times}$ and finally, $\Aa_{(1)}^{\times}$ denotes the ideles of norm $1$. We also fix once and for all, an isomorphism $\Aa^{\times}\simeq \Aa_{(1)}^{\times} \times \Rr_{>0}$. 

\subsection*{Additive characters} We let $\psi =\otimes_v\psi_v$ be the additive character $\psi=\psi_{\Qq}\circ\operatorname{Tr}_{F/\Qq}$ where $\operatorname{Tr}_{F/\Qq}$ it the trace map and $\psi_{\Qq}$ is the additive character on $\Aa_{\Qq}$ which is trivial on $\Qq$ and such that $\psi(x)=e^{2\pi i x}$ for $x\in \Rr$. Let $d_v$ be the conductor of $\psi_v$, \textit{i.e.}: the smallest non-negative integer such that $\psi_v$ is trivial on $\mf{m}_v^{d_v}$. Notice that $d_v=0$ for every finite place not dividing the discriminant and we have the relation $d_F=\prod_vp_v^{d_v}$, where $p_v:=|\mf{o}_v/\mf{m}_v|$.

\subsection*{Measures} In the group $\Aa$ we use a product measure $\d x=\prod_v\d x_v$, where for real $v$, $\d x_v$ is the Lebesgue measure on $\Rr$, for complex $v$, $\d x_v$ is twice the Lebesgue measure on $\Cc$ and for each finite $v$, $\d x_v$ is a Haar measure on $F_v$ giving measure $p^{-d_v/2}$ to the compact subgroup $\mf{o}_v$. As for the multiplicative group $\Aa^{\times}$, we also take a product measure $\d^{\times}x=\prod \d^{\times} x$, where $\d^{\times} x_v=\zeta_v(1)\frac{\d x_v}{|x_v|}$ for infinite or unramified $v$ and we take $\d^{\times}x_v:=p_v^{d_v/2}\xi_{F_v}(1)\frac{\d x_v}{|x_v|}$, for ramified $v$ so that for any finite $v$, we are giving measure $1$ to $\mathfrak{o}^{\times}_v$. Such measures can naturally give rise to measures on the quotient spaces $F\backslash\Aa$ and $F^{\times}\backslash\Aa^{\times}_{(1)}$ such that

$$
\operatorname{vol}(F\backslash \Aa)=1\text{, and }\operatorname{vol}(F^{\times}\backslash\Aa^{\times}_{(1)})=d_F^{1/2}\xi^{\ast}_F(1).
$$

The first can be found in Tate's thesis (see \cite[Chapter XV]{cassels-frohlich}) and the second is \cite[Proposition XIV.13]{Lang2013algebraic} (the factor $d_F^{1/2}$ comes from our different normalization of the multiplicative measure).


\section{Preliminaries on automorphic representations}\label{preliminaries}

In the course of studying automorphic forms in $\GL(n)$, it will be important to distinguish a few of its subgroups. For any unitary ring $R$ with group of invertible elements given by $R^{\times}$, we let $Z_n(R)$ denote the group of central matrices (\textit{i.e.} non-trivial multiples of the identity), $N_n(R)$ denote the maximal unipotent group formed by matrices with entries $1$ on the diagonal and $0$ below the diagonal and we let $A_n(R)$ denote the diagonal matrices with lower-right entry $1$.
%
%
%
%

We extend our additives character to $N_n$ in the following way: If  $n=(x_{i,j})_{1\leq i,j\leq n}\in\N_n(\Aa)$,
then $\psi(n):=\psi(x_{1,2}+\ldots+x_{n-1,n})$ and similarly for $\psi_v$.
We can extend the measures on the local fields $F_v$ and their unit groups $F_v^{\times}$ to measures on the groups $Z_n(F_v)$, $N_n(F_v)$ and $A_n(F_v)$ using the obvious isomorphisms $Z_n(R)\simeq R^{\times}$, $N_n(R)\simeq R^{\frac{n(n-1)}{2}}$ and $A_n(R)\simeq (R^{\times})^{n-1}$.

Moreover, let $K_v$ denote a maximal compact subgroup of $\GL_n(F_v)$ given by

$$
K_v:=
\begin{cases}
O(n),\text{ if }F_v=\Rr,\\
U(n),\text{ if }F_v=\Cc,\\
\GL_n(\mf{o}_v),v<\infty.
\end{cases}
$$

We can now define a Haar measure on $\GL_n(F_v)$ by appealing to the Iwasawa decomposition. Let $\d k$ be a Haar probability measure on $K_v$ and consider the surjective map

\begin{align*}
Z_n(F_v)\times N_n(F_v)\times A_n(F_v)\times K_v&\rightarrow \GL_n(F_v),\\
(z,n,a,k)&\mapsto znak
\end{align*}
and let $\d g_v$ be the pull-back by this map of the measure

$$
\Delta(a)^{-1}\prod_{k=1}^{n-1}y_k^{-k(n-k)} \times \d z \times \d n\times \d a\times \d k,
$$
where
$$
\Delta\begin{pmatrix} y_1&&&\\&\ddots&&\\&&y_{n-1}&\\&&&1 \end{pmatrix}=\prod_{j=1}^{n-1} |y_j|^{n+1-2j}.
$$
In particular, for $\GL_2$,


$$
\int_{\GL_2(F_v)}f(g_v)\d g_v=\int_{K_v}\int_{F_v^{\times}}\int_{F_v}\int_{F_v^{\times}} f\left(z(u)n(x)a(y)k\right)\d^{\times} u\d x\frac{\d^{\times} y}{|y|_v}\d k,
$$
where
$$
z(u)=\mtrx{u&\\&u},\;n(x)=\mtrx{1&x\\&1},\;a(y)=\mtrx{y&\\&1}.
$$

Similarly, we shall consider measures on the quotients $N_n(F_v)\bs \GL_n(F_v)$ and $\operatorname{PGL}_n(F_v):=Z_n(F_v)\bs \GL_n(F_v)$ by omiting the terms $\d n$ and $\d z$ respectively. Now, given a group $G$ for which we have attached Haar measures $\d g_v$ to $G(F_v)$, we attach to $G(\Aa)$ the product measure $\d g=\prod_v\d g_v$. Since $\PGL_2(F)\hookrightarrow \PGL_2(\Aa)$ discretely, we may use the measure of $\PGL_2(\Aa)$ to define one on
$$
\mathbf{X}:=\PGL_2(F) \bs \PGL_2(\Aa)=Z_2(\Aa)\GL_2(F)\bs \GL_2(\Aa),
$$
which turns out to have finite total measure $\operatorname{vol}(\mathbf{X})<+\infty$.

\subsection{Automorphic representations for $\GL(2)$}

Consider the Hilbert space $L^2(\mathbf{X})$ with an action of $\GL_2(\Aa)$ given by right multiplication and a $\GL_2(\Aa)$-invariant inner product given by
\begin{equation}\label{IP-L2}
\IP{\phi_1,\phi_2}_{L^2(\mathbf{X})}=\int_{\mathbf{X}}\phi_1(g)\overline{\phi_2(g)}\d g.
\end{equation}
It is well-known that this space decomposes as
\begin{equation}\label{spec-decomp}
L^2(\mathbf{X})=L^2_{\text{cusp}}(\mathbf{X})\oplus L^2_{\text{res}}(\mathbf{X})\oplus L^2_{\text{cont}}(\mathbf{X}),
\end{equation}
where $L^2_{\text{cusp}}(\mathbf{X})$ denotes the closed subspace of cuspidal functions given by the functions satisfying the relation

$$
\int_{N_2(F)\bs N_2(\Aa)}\phi(ng)\d n=0,
$$
$L^2_{\text{res}}(\mathbf{X})$ is the residual spectrum consisting of all the one-dimensional subrepresentations of $L^2(\mathbf{X})$ and $L^2_{\text{cont}}(\mathbf{X})$ is expressed in terms of Eisenstein series which we discuss further below. Moreover, $L^2_{\text{cusp}}(\mathbf{X})$ decomposes as a direct sum of irreducible representations, which are called the cuspidal automorphic representations.

\subsubsection{Induced representations and Eisenstein series}\label{Ind-Eis}

Given a (not necessarily unitary) character $\omega: F^{\times}\backslash \Aa^{\times}\rightarrow \Cc^{\times}$, we denote by $\pi(\omega)$ the isobaric sum $\omega\boxplus \omega^{-1}$, \textit{i.e.} the space of \emph{measurable} functions $f$ on $\GL_2(\Aa)$ such that

$$
f\left(\mtrx{a&b\\&d}g\right)=|a/d|^{1/2}\omega(a)\omega^{-1}(d)f(g),\; \IP{f,f}_{\text{Ind}}<+\infty,
$$
where $|\cdot|$ denotes the adelic norm, $K:=\prod_{v}K_v$ and $\IP{f_1,f_2}_{\text{Ind}}<\infty$, where we put
\begin{align}\label{Ind-IP}
\IP{f_1,f_2}_{\text{Ind}}&:=\int_{F^{\times}\backslash\Aa^{\times}_{(1)}\times K} f_1(a(y)k)\overline{f_2(a(y)k)}\d^{\times}y\,\d k\notag\\
&=\operatorname{vol}(F^{\times}\backslash\Aa^{\times}_{(1)})\int_K f_1(k)\overline{f_2(k)}\d k.
\end{align}

Given such $\omega$ and $f\in \pi(\omega)$, we define an Eisenstein series by a process of analytic continuation. It is given by the following series, as long as it converges:

$$
Eis(f)(g):=\sum_{\gamma\in B_2(F)\bs\GL_2(F)}f(\gamma g),
$$
where for a ring $R$, 
$$
B_2(R):=\left\{\mtrx{a&b\\&d};\;a,d\in R^{\times},\;b\in R\right\}.
$$
Suppose $\omega\neq 1$. We define further the \emph{normalized Eisenstein series} by taking, 
$$
\widetilde{Eis}(f):=L(1,\omega^2)Eis(f).
$$

It will be convenient to define the inner product of two normalized Eisenstein series in terms of the inner product in the induced model of the functions used for generating it. In other words, for $f_1,f_2\in\pi(\omega)$ and $\phi_i=\widetilde{Eis}(f_i)$, $i=1,2$, we write

\begin{equation}\label{Eis-IP}
  \IP{\phi_1,\phi_2}_{\widetilde{Eis}}:=|L(1,\omega^2)|^2\IP{f_1,f_2}_{\text{Ind}}=d_F^{1/2}\Lambda^{\ast}(1,\operatorname{Ad}\pi(\omega))\int_{K}f_1(k)\overline{f_2(k)}\d k.
\end{equation}

Finally, for a complex parameter $s$, we use the notation $\pi(\omega,s):=\pi(\omega\times|\cdot|^{s})$. For a character $\omega_v$ of $F_v$ we can similarly define the induced representation $\pi_v(\omega_v,s)$ so that if $\omega\simeq \otimes'_v \omega_v$, we have $\pi(\omega,s)\simeq \otimes_v'\pi_v(\omega_v,s)$. 

\subsubsection{Spectral decomposition for smooth functions}

We already encountered a decomposition of $L^2(\mathbf{X})$ in \eqref{spec-decomp}, but in practice we will encounter functions in $L^2(\mathbf{X})$ which are right invariant by a large compact subgroup $K_0\subset \GL_2(\Aa)$  and moreover we will need more uniformity than simply $L^2$-convergence. In the following, we write down a more precise form of this decomposition for functions in $\mc{C}^{\infty}(\mathbf{X}/K^S)$, where for a finite set $S$ of places of $F$ containing the archimedean ones, $K^S$ is the compact group given by

$$
K^S:=\prod_{v\not\in S}K_v.
$$

The only intervening representations are those that are unramified outside $S$. That means $\pi\in C(S)$ or $\pi=\pi(\omega,it)$, $\pi=\omega\circ\det$  for $\omega\in \Xi(S)$.  For each cuspidal automorphic representation $\pi$, we let $\mc{B}_{c}(\pi)$ denote an orthonormal basis of the realization of $\pi$ in $L^2(\mathbf{X})$ with respect to the inner product $\IP{\cdot,\cdot}_{L^2(\mathbf{X})}$. Similiarly, for an induced representation $\pi=\pi(\omega)$, we define $\mathcal{B}_e(\pi)$ to be a basis of normalized Eisenstein series (not vectors in the induced models!) with respect to $\IP{\cdot,\cdot}_{\widetilde{Eis}}$. We may therefore state the following version of the spectral theorem:

\begin{prop}\label{spectral theorem} Let $F\in \mc{C}^{\infty}\left(\mathbf{X}/K^S\right)$ and of rapid decay, then

\begin{align*}
F(g) = \sum_{\pi\in C(S)}\sum_{\phi\in\mc{B}_c\left(\pi\right)}\IP{ F, \phi}\phi(g) +\operatorname{vol}(\mathbf{X})^{-1}\sum_{\substack{\omega\in\Xi(S)\\ \omega^2=1}}\IP{F,\omega\circ\det}\omega(\det g)\\
+\frac{1}{4\pi}\sum_{\omega\in \Xi(S)}\int_{-\infty}^{\infty}\sum_{\substack{\phi\in\mc{B}_e\left(\pi(\omega,it)\right)}}\IP{F,\phi}\phi(g)\d t,
\end{align*}
where $\IP{,}$ denotes the same integral as in the definition of $\IP{,}_{L^2(\mathbf{X})}$ and convergence is absolute and uniform for $g$ on any compact subset of $\mathbf{X}$.
\end{prop}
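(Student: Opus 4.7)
The plan is to deduce this from the $L^2$ spectral decomposition \eqref{spec-decomp} in three steps: (i) restrict the basis to vectors unramified outside $S$; (ii) identify the residual and continuous contributions with their correct Plancherel constants; and (iii) upgrade from $L^2$-convergence to absolute uniform convergence on compacta of $\mathbf{X}$.

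For step (i): since $F$ is right $K^S$-invariant, for any automorphic form $\phi$ one has $\IP{F,\phi}=\IP{F,\int_{K^S}\phi(\cdot\,k)\,\d k}$, and the $K^S$-average of $\phi$ vanishes unless $\phi$ lies in a representation possessing a nonzero $K^S$-fixed vector. This cuts the cuspidal sum to $\pi\in C(S)$ and the residual and continuous contributions to characters $\omega\in\Xi(S)$. Inside each representation space the bases $\mc{B}_c(\pi)$ and $\mc{B}_e(\pi(\omega,it))$ may without loss be taken to consist of $K^S$-invariant vectors, so the pairing only engages the intended basis elements.

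For step (ii): the residual spectrum of $L^2(\mathbf{X})$ is classical. On $\PGL_2$ the trivial central character forces the poles of $\GL_2$-Eisenstein series to come from Hecke characters $\omega$ with $\omega^2=1$; the resulting one-dimensional subspaces are spanned by $\omega\circ\det$ with $L^2$-norm $\operatorname{vol}(\mathbf{X})^{1/2}$, producing the factor $\operatorname{vol}(\mathbf{X})^{-1}$. The continuous contribution is given by Langlands's Plancherel formula for $\GL_2$. With our normalization of Eisenstein series built from the factor $|L(1,\omega^2)|^2$ in \eqref{Eis-IP}, the Maass--Selberg relations together with the dual measure $\d t/(2\pi)$ on $\Rr$ and the equivalence $\pi(\omega,it)\simeq\pi(\overline\omega,-it)$ combine to give the Plancherel density $\d t/(4\pi)$ stated.

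Step (iii) is the main obstacle. To pass from $L^2$ convergence to uniform convergence on compacta, one exploits the smoothness and rapid decay of $F$. Applying powers of the archimedean Casimir operators on each factor of $F$ before pairing against $\phi$ and integrating by parts produces $|\IP{F,\phi}|\ll_{A,F}(1+|t|+|t_\pi|)^{-A}$ for every $A>0$. Combined with Weyl's law for $\GL(2)$ over $F$ bounding the density of the cuspidal spectrum, and with uniform Sobolev-type bounds for automorphic forms $\phi(g)$ on compact subsets of $\mathbf{X}$, this gives absolute and uniform convergence of the spectral expansion. This step is standard but technically delicate in the adelic setting; in practice one invokes Bernstein's abstract spectral decomposition or cites the detailed treatment in the Michel--Venkatesh lectures or in Wu's work.
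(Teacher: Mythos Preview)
Your outline is correct and would work, but it differs in emphasis from what the paper does. The paper does not rederive the Plancherel constants or upgrade convergence by hand; it simply cites Gelbart--Jacquet \cite{GelbartJacquet1979forms}, specifically equations (4.21) and (4.25), for the decomposition of a pseudo-Eisenstein series into the Eisenstein integral (with the $\tfrac{1}{4\pi}$ already built in) plus residual terms, and then invokes the fact that the cuspidal spectrum decomposes discretely and spans the orthogonal complement of the pseudo-Eisenstein series. In other words, the paper treats the continuous and residual pieces as a black box from Gelbart--Jacquet and only adds the cuspidal piece on top, whereas you reconstruct the entire decomposition from the abstract $L^2$ splitting \eqref{spec-decomp} and then justify each normalization and the pointwise convergence separately. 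Your route is more self-contained and makes the r\^ole of the Maass--Selberg relations and the Sobolev/Casimir argument explicit; the paper's route is more economical but relies on the reader chasing the precise normalizations through the cited source. Both lead to the same statement.
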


The result, for pseudo Eisenstein series. follows from equations (4.21) and (4.25) in \cite{GelbartJacquet1979forms} and by extending the inner product $(a_1(iy),a_2(iy))$ with respect to an orthogonal basis of $L^2(F^{\times}\backslash \Aa_{(1)}^{\times}\times K)$. The general result is a consequence of the fact that the space of cusp forms decomposes discretely and spans the orthogonal complement to the space of pseudo Eisenstein series. 

\section{Whittaker models and periods}\label{Whittaker-section}

In this section, we consider irreducible automorphic representations $\pi$ of $\GL_n(\Aa)$ and the period integrals related to some Rankin-Selberg $L$-functions. We will only be concerned with the generic representations, which are those admitting a Whittaker model. This is done for arbitray $n$ but only the cases $n=2$ and $n=1$ are used in the sequel. Finally, we shall not distinguish between a representation $\pi$ and its space of smooth vectors $V_{\pi}^{\infty}$. An automorphic form $\phi$ will always denote a smooth vector in an irreducible automorphic representation.

\subsection{Whittaker functions}\label{whittaker-decomposition}
Let $\pi$ be a generic automorphic representation of $\GL_n(\Aa)$ and let $\phi\in \pi$ be an automorphic form. Let $W_{\phi}:\GL_n(\Aa)\rightarrow \Cc$ be the Whittaker function of $\phi$ given by
\begin{equation}\label{Whittaker-Integral}
  W_{\phi}(g)=\int_{N_n(F)\bs N_n(\Aa)}\phi\left(n g\right)\overline{\psi(n)}\d n.
\end{equation}
It satisfies $W_{\phi}(ng)=\psi(n)W_{\phi}(g)$ for all $n\in\N_n(\Aa)$.

Given a cuspidal automorphic representation of $\GL_n(\Aa)$ we might write down an isomorphism $\pi\simeq\otimes'_v\pi_v$ where for each $v$, $\pi_v$ is a local generic admissible representation of $\GL_n(F_v)$ and we might define Whittaker functions for each local representation such that for every $\phi\in \pi$ with $\phi=\otimes'_v\phi_v$ through the above isomorphism, we have

\begin{equation}\label{Whittaker product}
W_{\phi}(g)=\prod_vW_{\phi_v}(g_v),\;g=(g_v)_v\in \GL_n(\Aa).
\end{equation}
%
In fact, the map $\phi\mapsto W_{\phi}$ is an intertwiner between $\pi$ and its image, denoted by $\mathcal{W}(\pi,\psi)$, the so-called Whittaker model of $\pi$. We similarly define the local Whittaker models $\mc{W}(\pi_v,\psi_v)$. Later on, it will be convenient to exchange freely between a representation and its associated Whittaker model. The importance of the latter comes from its close relation to local Rankin-Selberg $L$-functions, as we will see in \S \ref{integral-rep}.

There is a similar story for non-cuspidal forms but in this case it is better to work with \emph{normalized} Eisenstein series. As we will only need this for $n=2$, we shall restrict to this case. Let $f\in \pi(\omega)$ and supposes that $f$ is factorable, \emph{i.e.} $f=\otimes_v'f_v$ with $f_v\in\pi_v(\omega_v)$. Then, it follows by analytic continuation and Bruhat decomposition that
\begin{align*}
  W_{\widetilde{Eis}(f)}(g)=L(1,\omega^2)\int_{N_2(\Aa)}f(wng)\overline{\psi(n)}\d n=\prod_v  W^{J}_{f_v}(g_v),
\end{align*}
where $W^J_{f_v}$ is the normalized Jacquet integral, given by
$$
W^J_{f_v}(g_v)=L(1,\omega_v^2)\int_{N_2(F_v)}f_v(wng)\overline{\psi(n)}\d n.
$$
By putting $\phi=\widetilde{Eis}(f)$ and $W_{\phi_v}:=W^{J}_{f_v}$, we see that \eqref{Whittaker-product} also holds in this case.

It is also important to consider Whittaker functions with respect to the inverse character $\psi'=\overline{\psi}$, so we analogously define $W'_{\phi}$ and $W'_{\phi_v}$ by replacing $\psi_v$ by $\psi'_v=\overline{\psi_v}$ and $\psi$ by $\psi'=\prod_v\psi'_v$ in all the previous definitions. It follows from uniqueness of local Whittaker functions that we  may take

\begin{equation}\label{bars-and-primes}
W'_{\overline{\phi_v}}=\overline{W_{\phi_v}}\text{ for all places $v$ of $F$}.
\end{equation}

If a local generic admissible representation $\pi_v$ is unramified for some finite place $v$, this mean that in $\pi_v$ there exists a vector which is right invariant by the action of $\GL_n(\mf{o}_v)$. Such a vector is called \textit{spherical} and spherical vectors are unique up to multiplication by scalars. Among the spherical vectors we shall distinguish a certain one which we call \textit{normalized spherical}. If $v$ is unramified, the normalized spherical vector will be the one for which $W_{\phi_v}(e)=1$, where $e\in\GL_n(F_v)$ denotes the identity element. For the finite ramified places we simply define it to be the newform (defined in \S \ref{newforms}). This avoids repetition and is justified by the fact that the two notions also coincide for unramified primes.

\subsection{Integral representations of $\GL_{n+1}\times \GL_n$ $L$-functions}\label{integral-rep}

This theory is an outgrowth of Hecke's theory of $L$-functions for $\GL_2$ and has been developed by Jacquet, Piatetski-Shapiro and Shalika. We start with $\Pi$ and $\pi$ irreducible automorphic representations of $\GL_{n+1}(\Aa)$ and $\GL_n(\Aa)$ respectively and let $\Phi\in\Pi$, $\phi\in \pi$ be automorphic forms. Suppose momentarily that $\Phi$ is a cusp form and hence rapidly decreasing. We can thus consider for every $s\in \Cc$ the following integral
$$
I(s,\Phi,\phi):=\int_{\GL_n(F)\bs \GL_n(\Aa)}\Phi\mtrx{h&\\&1}\phi(h)|\det h|^{s-\frac12}\d h.
$$
It follows from the Whittaker decomposition of cusp forms (see \cite[Theorem 1.1]{Cogdell2007functions}) that if $\Phi$ is a cuspidal function
\begin{equation}\label{I=W-cusp}
I(s,\Phi,\phi)=\Psi(s,W_{\Phi},W'_{\phi}),\;\;(\Ree(s)\gg 1),
\end{equation}
where
\begin{equation}\label{Psi}
\Psi(s,W,W'):=\int_{N_n(\Aa)\bs \GL_n(\Aa)}W\mtrx{h&\\&1}W'(h)|\det h|^{s-\frac12}\d h.
\end{equation}
Our next result gives some of the good properties of $\Psi(s,W,W')$, \textit{i.e.} convergence, boundedness in vertical strips and the fact that it factors into local integrals whenever $\Phi$ and $\phi$ also factor.

  \begin{prop}\label{RS} Let $\Pi$ and $\pi$ be automorphic representations of $\GL(n+1)$ and $\GL(n)$ over $F$, respectively. Let $\Phi=\otimes'_v\Phi_v\in \Pi$ and $\phi=\otimes'_v\phi_v\in \pi$ be automorphic forms. Let $W_{\Phi_v}$ and $W'_{\phi_v}$ be as in \S  \ref{whittaker-decomposition}. Then, for $\Ree(s)\gg 1$, $\Psi(s,W_{\Phi},W'_{\phi})$ converges and we have the factorization
$$
\Psi(s,W_{\Phi},W'_{\phi})=\prod_v\Psi_v(s,W_{\Phi_v},W'_{\phi_v}),
$$
where
\begin{equation}\label{Psiv}
\Psi_v(s,W,W'):=\int_{N_n(F_v)\bs \GL_n(F_v)}W\mtrx{h_v&\\&1}W'(h_v)|\det h_v|^{s-\frac12} \d h_v.
\end{equation}
Moreover, if $v$ is finite and both $\Pi_v$ and $\pi_v$ are unramified and $\Phi_v$ and $\phi_v$ are normalized spherical,
$$
\Psi_v(s,W_{\Phi_v},W'_{\phi_v})=p_v^{d_vl_n(s)}L(s,\Pi_v\times\pi_v),
$$
where
$$
l_n(s)=\frac{n(n+1)}{2}s-\frac{n(n+1)(2n+1)}{12}.
$$
\end{prop}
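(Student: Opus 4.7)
The proposition contains three claims: convergence of $\Psi(s,W_\Phi,W'_\phi)$ for $\Re(s)\gg 1$, its factorization into local integrals $\Psi_v$, and the explicit evaluation at unramified finite places. My plan is to treat convergence and factorization together, then handle the unramified local computation, and finally track the correction factor $p_v^{d_v l_n(s)}$ coming from the paper's measure conventions.

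For the first two claims I would use the Iwasawa decomposition to write each $h_v = n_va_vk_v$ and exploit the Whittaker transformation property $W_{\Phi_v}\bigl(\begin{smallmatrix} n_vh_v&\\&1\end{smallmatrix}\bigr)=\psi_v(n_v)W_{\Phi_v}\bigl(\begin{smallmatrix} h_v&\\&1\end{smallmatrix}\bigr)$ (and similarly for $W'_{\phi_v}$), which cancels the $N_n(F_v)$ integration in $\Psi_v$ and reduces it to an integral over $A_n(F_v)\times K_v$ weighted by $\delta_B^{-1}$. Absolute convergence for $\Re(s)$ sufficiently large then follows from the standard gauge estimates on Whittaker functions (Jacquet, Shalika): rapid decay in the dominant direction on $A_n$ and polynomial bounds elsewhere, combined with the modulus-character Jacobian. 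Factorization is immediate from the product structure $W_\Phi=\prod_v W_{\Phi_v}$, $W'_\phi=\prod_v W'_{\phi_v}$ (valid for pure tensors, both in the cuspidal case \eqref{Whittaker product} and in the Eisenstein case through the normalized Jacquet integral as recalled in \S\ref{whittaker-decomposition}), and from $dh=\prod_v dh_v$; that the resulting Euler product converges is part of the content of the unramified computation.

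For the unramified computation, at a finite place $v$ where $\Pi_v,\pi_v$ are unramified and $\Phi_v,\phi_v$ are normalized spherical, the block embedding $k\mapsto\bigl(\begin{smallmatrix} k&\\&1\end{smallmatrix}\bigr)$ sends $K_v=\GL_n(\mf{o}_v)$ into $\GL_{n+1}(\mf{o}_v)$, so both Whittaker functions are right $K_v$-invariant, and the reduction above collapses $\Psi_v$ to an integral on $A_n(F_v)$. On the torus, the Casselman--Shalika formula expresses the spherical Whittaker functions as Schur polynomials in the Satake parameters: letting $\alpha=(\alpha_1,\ldots,\alpha_{n+1})$ and $\beta=(\beta_1,\ldots,\beta_n)$ denote those of $\Pi_v$ and $\pi_v$, one gets $W_{\Phi_v}$ and $W'_{\phi_v}$ on diagonal matrices as $\delta_B^{1/2}\cdot s_\lambda(\alpha)$ and $\delta_B^{1/2}\cdot s_\lambda(\beta)$ supported on dominant weights $\lambda$. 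Feeding these into the torus integral produces a multiple geometric sum $\sum_\lambda s_\lambda(\alpha)s_\lambda(\beta)p_v^{-s|\lambda|}$, which the Cauchy--Littlewood identity sums to $\prod_{i,j}(1-\alpha_i\beta_jp_v^{-s})^{-1}=L(s,\Pi_v\times\pi_v)$.

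The prefactor $p_v^{d_vl_n(s)}$ is pure bookkeeping. Under the paper's convention from Section \ref{notations}, when $d_v>0$ the additive measure gives $\mf{o}_v$ volume $p_v^{-d_v/2}$, which multiplies each copy of $F_v$ inside $N_n(F_v)$ and propagates through the measure on $N_n(F_v)\bs\GL_n(F_v)$; simultaneously, the spherical Whittaker functions for a $\psi_v$ of positive conductor are supported on a diagonal translate of the dominant cone, and a shift of integration variables $\mathrm{diag}(\varpi_v^{m_i})\mapsto\mathrm{diag}(\varpi_v^{m_i-(n-i)d_v})$ returns us to the standard computation at the cost of a power of $p_v^{d_v}$ coming from the $|\det h|^{s-1/2}\delta_B^{1/2}$ weighting. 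Combining both effects and adding over $i=1,\ldots,n$ produces the exponent $\frac{n(n+1)}{2}s-\frac{n(n+1)(2n+1)}{12}=l_n(s)$. The conceptual content is classical (Jacquet--Piatetski-Shapiro--Shalika), and I expect the \emph{only} real difficulty to lie in this last step: meticulously tracking the $p_v^{d_v}$-powers so that both the linear-in-$s$ term and the constant term of $l_n(s)$ come out correctly under the specific measure normalization adopted in the paper.
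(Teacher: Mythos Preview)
Your proposal is correct and follows essentially the same approach as the paper, which treats the proposition largely by citation: gauge estimates from Jacquet--Piatetski-Shapiro--Shalika for convergence, the standard unramified computation (as in Cogdell's notes) for the local factor, and a reduction to the unramified-$\psi_v$ case following Cogdell--Piatetski-Shapiro for the $d_v$ correction. You have in fact supplied more detail than the paper does.

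One small imprecision worth flagging: in your tracking of the $p_v^{d_v l_n(s)}$ factor you attribute part of it to the additive measure on $N_n(F_v)$ propagating through to $N_n(F_v)\backslash\GL_n(F_v)$. But the quotient measure is built from $A_n(F_v)\times K_v$ (omitting $\d n$), and under the paper's conventions the multiplicative measure on $F_v^\times$ is normalized so that $\mathfrak{o}_v^\times$ has volume $1$ regardless of $d_v$. The entire $d_v$ correction therefore comes from the shifted support of the spherical Whittaker functions (the translate by $a(\iota_n(d_v))$ recalled in \S\ref{newforms}), via the change of variables on the torus and the resulting powers of $|\det|^{s-1/2}$ and the modulus character. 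This does not affect the validity of your argument, only the bookkeeping attribution.
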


\begin{proof}
  The first part follows from gauge estimates for Whittaker functions (\textit{cf.} \cite[\S 2]{JPSS1979automorphic}).  It is an important fact that this part does not require the representations to be cuspidal. The reason is that in some sense, the integral representation using Whittaker functions only sees the non-constant terms. For the local computation this is well-known when $p_v$ is unramified (see \emph{e.g.} \cite[Theorem 3.3]{Cogdell2007functions}). In general we may resctrict to the unramified situation by following the computation in the proof of \cite[Lemma 2.1]{CPS1994converse}.
\end{proof}

\begin{rmk}\label{shorter-notation}
In the case $n=1$ we shall write $I(s,\phi)$, $\Psi(s,W_{\phi})$ and $\Psi_v(s,W_{\phi_v})$ instead of $I(s,\phi,\mathbf{1})$, $\Psi(s,W_{\phi},W_{\mathbf{1}_v})$ and $\Psi_v(s,W_{\phi_v},W_{\mathbf{1}})$, where $\mathbf{1}$ and $\mathbf{1}_v$ denote the constant functions on $\GL_1(\Aa)$ and $\GL_1(F_v)$ respectively. 
\end{rmk}

\subsection{Newforms and ramified $L$-factors}\label{newforms}

For a finite place $v$ and any admissible irreducible generic representation of $\GL_n(F_v)$, not necessarily unramified, we define a distinguished vector in its Whittaker model, called \textit{newform}. This was first introduced by Casselman \cite{Casselman1973some} when $n=2$ by translating the results of Atkin--Lehner to the representation-theoretic language. This was later generalized by Jacquet--Piatetski-Shapiro-Shalika \cite{JPSS1981conducteur} for general $n$ by requiring that they are good test vectors for representating $L$-functions via Rankin-Selberg periods as in \S \ref{integral-rep}. Moreover, when $\pi_v$ is unramified, these coincide with normalized spherical vectors.

The fact that these newvectors are test vectors for Rankin-Selberg $L$-functions can be rephrased by relating its values to the Langlands parameters of the representation. This was carefully carried out in \cite{Miyauchi2014Whittaker}. In order to quote these results we introduce the following pieces of notation: For $\underline{\nu}=(\nu_1,\ldots,\nu_{n-1})\in \mathbb{Z}^{n-1}$, let $s(\underline{\nu})=\sum_{i=1}^{n-1}\frac{i(n-i)\nu_i}{2}$ and for $y\in F_v^{\times}$, we write 
$$
a(\underline{\nu}):=\begin{pmatrix} \varpi_v^{\nu_1+\ldots+\nu_{n-1}}&&&\\&\varpi_v^{\nu_2+\ldots+\nu_{n-1}}&&\\&&\ddots&&\\ &&&1 \end{pmatrix}.
$$
The main result of \cite{Miyauchi2014Whittaker} states that if $\psi_v$ is unramified, we have
$
W_{\pi_v}(a(\underline{\nu}))=p^{s(\nu)}\lambda_{\pi}(\underline{\nu}),
$
where
%
%
\begin{equation}\label{only-small}
\lambda_{\pi_v}(\underline{\nu})=0,\text{ unless }\nu_1,\ldots,\nu_{n-1}\geq 0,
\end{equation}
and are in general given by Schur polynomials evaluated on the Langlands parameters of $\pi_v$ (\textit{cf.} \cite{Miyauchi2014Whittaker} for details).

Now, when $\psi_v$ is ramified, this has to be modified. First, we write $\psi_v(x)=\psi_{F_v}(\lambda x)$ for some $\lambda\in F_v^{\times}$, where $\psi_{F_v}$ is an unramified additive character of $F_v$ and let $d=v(\lambda)$. We then define the \emph{newvector} by taking
$$
W_{\pi_v}(g)=W^{unr}_{\pi_v}(a(\iota_n(d))g),
$$
where $\iota_n(d)=(d,d,\ldots,d)\in \Zz^{n-1}$ and $W^{unr}_{\pi_v}$ denotes the newvector for the unramified character $\psi_{F_v}$. The term $a(\iota_n(d))$ is responsible by the change in the additive character.

In addition to Proposition \ref{RS} we shall also need to compute $L$-functions for certain ramified local representations. In particular, we require the following computation that appears, for instance in the work of Booker-Krishnamurhty-Lee (see the proof of \cite[Lemma 3.1]{BKL2020test}): Let $n>m$ and let $\Pi_v$ (resp. $\pi_v$) be an irreducible admissible generic representation of $\GL_n(F_v)$ (resp. $\GL_m(F_v)$) with Langlands parameters $(\gamma_{\Pi_v}^{(i)})_{i=1}^n$ (resp. $(\gamma_{\pi_v}^{(j)})_{j=1}^m$ ). Suppose further that $\pi_v$ is ramified, then one has

\begin{equation}\label{L-factor}
\sum_{\substack{\underline{\nu}\in \mathbb{Z}^{m-1}\\ \nu_1,\ldots,\nu_{m-1}\geq 0}}\frac{\lambda_{\Pi_v}(\underline{\nu},0,\ldots,0)\lambda_{\pi_v}(\underline{\nu})}{p_v^{(\nu_1+2\nu_2+\ldots+(m-1)\nu_{m-1})s}}=L(s,\Pi_v\times\pi_v),
\end{equation}
where 
\begin{equation}\label{L-naive}
  L(s,\Pi_v\times \pi_v):=\prod_{\substack{1\leq i\leq n\\1\leq j\leq m}}\left(1-\gamma_{\Pi_v}^{(i)}\gamma_{\pi_v}^{(j)}p_v^{-s}\right)^{-1}.
\end{equation}
This coincides with Langlands local $L$-function when $\Pi_v$ is unramified, which we shall suppose.
\subsection{Relation between inner products on $\GL(2)$}

Let $\pi$ be a generic automorphic representation of $\GL(2)$ over $F$ with trivial central character. We define a $\GL_2(\Aa)$ invariant inner product on the representation space of $\pi$ as follows: If $\pi$ is cuspidal, then we may see $V_{\pi}$ embedded in $L^2(\mathbf{X})$ and therefore $\pi$ may inherit the inner product from $L^2(\mathbf{X})$ given by \eqref{IP-L2}. If $\pi$ is Eisenstein we cannot see the representation space of $\pi$ inside $L^2(\mathbf{X})$ and hence we equip it with the inner product given by \eqref{Eis-IP}.

There is however another way of defining an inner product for factorable vectors in these representations which is independent of whether $\pi$ is cuspidal or Eisentein. This is done by using the Whittaker model as follows: For each place $v$, we have a $\GL_2(F_v)-$invariant inner form on $\mc{W}(\pi_v,\psi_v)$ by letting.
\begin{equation}\label{whitt-IP}
  \vartheta_v(W_1,W_2)=\frac{\int_{F_v^{\times}}W_1(a(y_v))\overline{W_2(a(y_v))}\d ^{\times} y_v,}{\zeta_v(1)L_v(1,\operatorname{Ad}\pi)/\zeta_v(2)}.
\end{equation}
The fact that the numerator of \eqref{whitt-IP} is indeed right $\GL_2(F_v)-$invariant follows form the theory of the Kirillov model and the inclusion of the denominator is to ensure the following property: Whenever $\pi_v$ and $\psi_v$ are unramified and $W$ is normalized spherical, we have $\vartheta_v(W,W)=1$. Finally, let $\phi_1=\otimes\phi_{1,v}$ and $\phi_2=\otimes\phi_{2,v}$ be either cusp forms or normalized Eisenstein series, we define the \textit{canonical} inner product by the formula
\begin{equation}\label{Whittaker-product}
  \IP{\phi_1,\phi_2}_{can}:=2d_F^{1/2}\Lambda^{\ast}(1,\operatorname{Ad}\pi)\times \prod_v\vartheta_v(W_{\phi_{1,v}},W_{\phi_{2,v}}).
\end{equation}
Since every two $\GL_2(\mathbb{A})-$invariant inner products in $\pi$ must be equal up to multiplication by some scalar, it follows that we can compare the canonical inner product with the ones introduced earlier for cuspidal and Eisenstein representations. Indeed, Rankin-Selberg theory in the cuspidal case and a direct computation in the Eisenstein case gives us the following relation: 
\begin{equation}\label{relation between inner products}
\IP{\phi_1,\phi_2}_{can}=
\begin{cases}
\IP{\phi_1,\phi_2}_{L^2(\mathbf{X})},\text {if }\pi\text{ is cuspidal},\\
2\IP{\phi_1,\phi_2}_{\text{Eis}},\text {if }\pi\text{ is Eisenstein}.
\end{cases}
\end{equation}

The computation in the Eisenstein case follows from \cite[Lemma 2.8]{Wu2014Burgess}. For the cusp forms we combine the proof of \cite[Proposition 2.13]{Wu2014Burgess} with the value of the residue of an Eisenstein series computed in \cite[Eq. (4.6)]{MV2010subconvexity}\footnote{In \cite{Wu2014Burgess}, a factor $d_F^{1/2}$ seems to be missing in the computation of this residue.}.

\section{Abstract reciprocity}\label{Abstract-reciprocity}

In this section we show an identity between two periods. At this point we make no attempt to relate them to moments of $L-$funcitons. The proof is a rather simple matrix computation.

Suppose $\Phi\in \mc{C}^{\infty}(Z_3(\Aa)\GL_3(F)\bs\GL_3(\Aa))$ is such that for every $h\in \GL_2(\mathbb{A})$, the integral

\begin{equation}\label{AsPhi}
\mathcal{A}_s\Phi(h):=|\det h|^{s-\frac12}\int_{F^\times\bs\Aa^{\times}}\Phi\mtrx{z(u)h&\\&1}|u|^{2s-1}d^{\times} u.
\end{equation}
converges and such that $y\mapsto \mc{A}_s\Phi\left(\mtrx{y&\\&1}h\right)$ is of rapid decay as $|y|$ $\rightarrow$ $0$ or $+\infty$.

\begin{prop}\label{abstract-reciprocity}

  Let $\Phi$ be as above and let $I(w,\cdot)$ be as in Remark \ref{shorter-notation}. Then, for every $s,\,w\in\Cc$, we have the reciprocity relation

$$
I(w,\mc{A}_s\Phi)=I(w',\mc{A}_{s'}\widecheck{\Phi}),
$$
where $(s',w')$ are as in \eqref{s'w'} and 
\begin{equation}\label{check-Phi}
  \widecheck{\Phi}(g):=\Phi(gw_{23}),\;\;w_{23}=\mtrx{1&&\\&&1\\&1&}.
\end{equation}

\end{prop}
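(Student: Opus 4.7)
The plan is to unfold both $I(w,\mc{A}_s\Phi)$ and $I(w',\mc{A}_{s'}\widecheck{\Phi})$ into iterated integrals over $(F^\times\bs\Aa^\times)^2$ along the diagonal torus of $\GL_3$, and then match them via a single matrix factorisation combined with the $Z_3(\Aa)\GL_3(F)$-invariance of $\Phi$.

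First I would unfold the left-hand side. By Remark \ref{shorter-notation} applied with $n=1$ combined with the definition \eqref{AsPhi} and Fubini (justified by the hypothesised convergence and rapid-decay properties of $\mc{A}_s\Phi$), followed by the unimodular change of variables $(a,b)=(uy,u)$ on $(F^\times\bs\Aa^\times)^2$, one obtains
$$
I(w,\mc{A}_s\Phi)=\iint_{(F^\times\bs\Aa^\times)^2}\Phi\mtrx{a&&\\&b&\\&&1}|a|^{s+w-1}|b|^{s-w}\,d^\times a\,d^\times b.
$$
Performing the identical manipulation on the right-hand side, with $(s,w)$ replaced by $(s',w')$ and $\Phi$ replaced by $\widecheck\Phi$, produces the analogous double integral with integrand $\widecheck\Phi(\mathrm{diag}(uy,u,1))\,|y|^{s'+w'-1}|u|^{2s'-1}$.

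The key step is the matrix identity
$$
\mtrx{uy&&\\&u&\\&&1}w_{23}=\mtrx{1&&\\&&1\\&1&}\mtrx{uy&&\\&1&\\&&u},
$$
whose left factor on the right-hand side lies in $\GL_3(F)$. Using left $\GL_3(F)$-invariance of $\Phi$ to eliminate this permutation and then $Z_3(\Aa)$-invariance to strip off the scalar $u$, one gets
$$
\widecheck\Phi\mtrx{uy&&\\&u&\\&&1}=\Phi\mtrx{uy&&\\&1&\\&&u}=\Phi\mtrx{y&&\\&1/u&\\&&1}.
$$
After the substitution $v:=1/u$ the right-hand side becomes
$$
I(w',\mc{A}_{s'}\widecheck\Phi)=\iint \Phi\mtrx{y&&\\&v&\\&&1}|y|^{s'+w'-1}|v|^{1-2s'}\,d^\times y\,d^\times v.
$$

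Finally, comparing integrands of the two double integrals reduces the identity to the pair of linear equations $s+w=s'+w'$ and $s-w=1-2s'$, whose unique solution is exactly $(s',w')=\bigl(\tfrac{1+w-s}{2},\tfrac{3s+w-1}{2}\bigr)$ as prescribed in \eqref{s'w'}. No deep argument is required: the proposition is a formal consequence of the matrix factorisation above, and the only mildly technical point is justifying the interchange of the two toral integrations, which is taken care of by the absolute convergence and rapid-decay hypotheses imposed on $\mc{A}_s\Phi$ (and, by symmetry of the construction, on $\mc{A}_{s'}\widecheck\Phi$).
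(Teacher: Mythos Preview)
Your proof is correct and follows essentially the same route as the paper: both arguments unfold $I(w,\mc{A}_s\Phi)$ as a double torus integral, apply the same matrix factorisation together with $Z_3(\Aa)\GL_3(F)$-invariance to pass between $\Phi$ and $\widecheck\Phi$, and then make the obvious change of variables on the torus. The only cosmetic difference is that you perform an extra preliminary substitution $(a,b)=(uy,u)$ on the left-hand side before comparing, whereas the paper manipulates the original $(u,y)$ variables directly.
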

\begin{proof}
By definition,

\begin{equation}\label{LwAs=intint}
I(w,\mc{A}_s\Phi)=\int_{F^{\times}\bs\Aa^{\times}}\int_{F^{\times}\bs\Aa^{\times}}\Phi\mtrx{z(u)a(y)&\\&1}|u|^{2s-1}|y|^{s+w-1}\d^{\times}u\,\d^{\times}y.
\end{equation}

Now, since $\Phi$ is left invariant by $Z_3(\Aa)\GL_3(F)$, we see that for every $u,\;y\in\Aa^{\times}$, one has

\begin{align*}
\Phi\mtrx{uy&&\\&u&&\\&&1}&=\Phi\left(\mtrx{u&&\\&u&\\&&u}w_{23}\mtrx{y&&\\&
u^{-1}&\\&&1}w_{23}\right)\\
&=\Phi\left(\mtrx{y&&\\&u^{-1}&\\&&1}w_{23}\right)=\check{\Phi}\mtrx{y&&\\&u^{-1}&\\&&1}.
\end{align*}

%
%
%
%
%
%

Applying this to \eqref{LwAs=intint} and making the change of variables $(u,y)= (u'^{-1},u'y')$ gives the result.

\end{proof}
\section{Spectral expansion of the period}\label{period-expansion}

In this section we will give a spectral decomposition of the period $I(w,\mc{A}_s\Phi)$.
%
%
Let $\Pi$ be an automorphic cuspidal representation for $\GL(3)$ over $F$ and let $\Phi=\otimes_v\Phi_v\in \Pi$ be a cusp form. Let $S$ be a finite set of places containing all archimedean places and all the places for which $\Phi$ is not normalized spherical. 
Since $\Phi$ is of rapid decay, then the same holds for $\mathcal{A}_s\Phi$. More precisely this follows by combining the rapid decay of Whittaker functions with the action of the Weyl group of $\GL(3)$. We can thus spectrally decompose it as in Proposition \ref{spectral theorem}:
\begin{multline*}
  \mc{A}_s\Phi(h) = \sum_{\pi\in C(S)}\sum_{\phi\in\mc{B}_c\left(\pi\right)}\IP{ \mc{A}_s\Phi, \phi}\phi(h) +\operatorname{vol}(\mathbf{X})^{-1}\sum_{\substack{\omega\in\Xi(S)\\\omega^2=1}}\IP{\mc{A}_s\Phi,\omega\circ \det}\omega(\det g)\\
+\frac{1}{4\pi}\sum_{\omega\in\Xi(S)}\int_{-\infty}^{\infty}\sum_{\substack{\phi\in\mc{B}_e\left(\pi(\omega,it)\right)}}\IP{\mc{A}_s\Phi,\phi}\phi(h)\d t.
\end{multline*}

%
%

By integrating both sides of the above expression against an additive character and over the compact set $N_2(F)\bs N_2(\Aa)$, we get the following relation for Whittaker functions:
$$
W_{\mc{A}_s\Phi}(h)= \sum_{\pi\in C(S)}\sum_{\phi\in\mc{B}_c\left(\pi\right)}\IP{ \mc{A}_s\Phi, \phi}W_{\phi}(h)+\frac{1}{4\pi}\sum_{\pi\in \Xi(S)}\int_{-\infty}^{\infty}\sum_{\substack{\phi\in\mc{B}_e\left(\pi(\omega,it)\right)}}\IP{\mc{A}_s\Phi,\phi}W_{\phi}(h)\d t.
$$
Notice that since the one-dimensional representations are not generic, they do not contribute to the above expression. Now, because of rapid decay of the Whittaker functions $W_{\phi}$ as $|y|\rightarrow +\infty$, then if we take $\Ree(w)$ sufficiently large, we get

\begin{multline}\label{no-constant-term}
\Psi(w,\mc{A}_s\Phi) = \sum_{\pi\in C(S)}\sum_{\phi\in\mc{B}_c\left(\pi\right)}\IP{ \mc{A}_s\Phi, \phi}\Psi(w,W_{\phi})\\
+\frac{1}{4\pi}\sum_{\omega\in\Xi(S)}\int_{-\infty}^{\infty}\sum_{\substack{\phi\in\mc{B}_e\left(\pi(\omega,it)\right)}}\IP{\mc{A}_s\Phi,\phi}\Psi(w,W_{\phi})\,\d t.
\end{multline}
By using the Fourier decomposition of $\mc{A}_s\Phi$, we see that
$$
\int_{\Aa^{\times}}W_{\mc{A}_s\Phi}\left(a(y)\right)|y|^{w-\frac12}\d^{\times}y=I(w,\mc{A}_s\Phi) - I(w,(\mathcal{A}_s\Phi)_0),
$$
where for any $\phi$ on $\mathcal{C}^{\infty}(\mathbf{X})$, $\phi_0$ is given by 

$$
\phi_0(h):=\int_{F\backslash \mathbb{A}}\phi(n(x)h)\d x.
$$
The next step  is to realize the terms $\IP{\mc{A}_s\Phi,\phi}$ and $\Psi(w,W_{\phi})$ as a product of local integrals. First, it follows from Proposition \ref{RS} that if $\phi=\otimes_v\phi_v$ is decomposable,
$$
\Psi\left(w,W_{\phi}\right)=\prod_v\Psi_v(w,W_{\phi_v}).
$$
Moreover, from the definition of $\mc{A}_s\Phi$, we deduce, after changing variables that
\begin{equation*}
\IP{\mc{A}_s\Phi,\phi}=I(s,\Phi,\overline{\phi}).
\end{equation*}
Since $\Phi$ is a cusp form on $\GL(3)$, it follows from \eqref{I=W-cusp} and Proposition \ref{RS}, that for $\Ree(s)$ sufficiently large and factorable $\phi$,

$$
I(s,\Phi,\overline{\phi})=\Psi(s,W_{\Phi},W'_{\overline{\phi}})=\prod_{v}\Psi_v(s,W_{\Phi_v},W'_{\overline{\phi_v}}),
$$
where $\Psi_v(s,W,W')$ is given by \eqref{Psiv}. As a consequence, we have

\begin{multline}\label{spectral-final}
  I\left(w,W_{\mc{A}_s\Phi}\right)= I\left(w,(\mc{A}_s\Phi)_0\right)+\sum_{\pi\in C(S)}\sum_{\phi\in\mc{B}_c\left(\pi\right)} \prod_{v}\Psi_v\left(s,W_{\Phi_v},W'_{\overline{\phi}_v}\right)\prod_v\Psi_v(s,W_{\phi_v})\\
+\frac{1}{4\pi}\sum_{\omega\in \Xi(S)}\int_{-\infty}^{\infty}\sum_{\substack{\phi\in\mc{B}_e\left(\pi(\omega,it)\right)}}\prod_{v}\Psi_v\left(s,W_{\Phi_v},W'_{\overline{\phi}_v}\right)\prod_v\Psi_v(s,W_{\phi_v})\,\d t.
\end{multline}

For each generic automorphic representation $\pi$ we will now construct an orthonormal basis for $V_{\pi}$ which is formed exclusively by factorable vectors: We start by choosing for each place $v$, an orthogonal basis $\mc{B}^W(\pi_v)$ of the space $\mc{W}(\pi_v,\psi_v)$. Consider now the elements $\phi=\otimes_v\phi_v$ such that for every finite $v$, $W_{\phi_v}$ lies in $\mc{B}^W(\pi_v)$ and for for all but finitely many $v$, $W_{\phi_v}$ is normalized spherical. This provides us with an \textit{orthogonal} basis for $V_{\pi}$. In order to get an \textit{orthonormal} basis we multiply these vectors by the correcting factors coming from \eqref{relation between inner products}. Applying these steps to \eqref{spectral-final} leads to the following (the slightly awkward normalization is justified by the last part of Proposition \ref{RS}):

\begin{prop}\label{spectral-terms-almost}
Let $\Pi$ be a cuspidal automorphic representation and let $\Phi=\otimes_v\Phi_v\in \Pi$ be a cusp form. Then, for complex numbers $s$ and $w$ with sufficienlty large real parts, we have

\begin{equation*}
  2d_F^{\frac72-3s-w}I\left(w,\mc{A}_s\Phi\right)= \mathcal{M}_{s,w}(\Phi) + \mathcal{D}_{s,w}(\Phi),
\end{equation*}
where 
\begin{equation}\label{H-def}
  H(\pi)=\prod_vH_v(\pi_v),\;\;H_v(\pi_v):=p_v^{d_v(3-3s-w)}\!\!\!\!\sum_{W\in\mc{B}^W(\pi_v)}\frac{\Psi_v(s,W_{\Phi_v},\overline{W})\Psi_v(w,W)}{L(s,\Pi_v\times \pi_v)L(w,\pi_v)},
\end{equation}
$\mathcal{M}_{s,w}(\Phi)$ is as in \eqref{M=C+E} and
\begin{multline}\label{D-def}
  \mathcal{D}_{s,w}(\Phi):=
  2d_F^{\frac72-3s-w}\int_{F^{\times}\backslash \mathbb{A}^{\times}}\int_{F\backslash \mathbb{A}}\int_{F^{\times}\backslash \mathbb{A}^{\times}} \Phi\begin{pmatrix} z(u)n(x)a(y)&\\&1 \end{pmatrix}\\
\times
|u|^{2s-1}|y|^{s+w-1}\d^{\times}u\,\d x\,\d^{\times}y.
\end{multline}
\end{prop}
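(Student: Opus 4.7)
The proof plan is essentially already traced out in the discussion preceding the statement; what remains is to organize it into a clean argument and to carefully track the normalizing constants so the factor $2 d_F^{7/2-3s-w}$ comes out correctly.

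\textbf{Step 1: Spectral expansion and passage to Whittaker data.} Since $\Phi$ is a cusp form on $\GL_3(\mathbb{A})$, the period $\mathcal{A}_s\Phi$ is of rapid decay on $\mathbf{X}$ for $\Re(s)$ large (combine the gauge estimates for Whittaker functions with the Bruhat decomposition), and moreover $\mathcal{A}_s\Phi \in \mathcal{C}^\infty(\mathbf{X}/K^S)$. Thus Proposition \ref{spectral theorem} applies. Integrate the resulting spectral identity against $\overline{\psi(n)}$ over $N_2(F)\backslash N_2(\mathbb{A})$: the one-dimensional pieces $\omega \circ \det$ are non-generic and thus vanish, and the cuspidal and Eisenstein terms produce the corresponding Whittaker functions $W_\phi$. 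Next, integrate against $|y|^{w-\frac12}$ over $F^\times \backslash \mathbb{A}^\times$; the Mellin transform of $W_{\mathcal{A}_s\Phi}$ equals $I(w,\mathcal{A}_s\Phi)-I(w,(\mathcal{A}_s\Phi)_0)$, because Whittaker integration sees only the non-constant Fourier modes. Convergence for $\Re(w)$ large follows from rapid decay of the $W_\phi$.

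\textbf{Step 2: Unfolding and Euler factorization.} A direct unfolding shows $\langle \mathcal{A}_s\Phi, \phi\rangle = I(s,\Phi,\overline{\phi})$, and because $\Phi$ is cuspidal \eqref{I=W-cusp} converts this into $\Psi(s,W_\Phi,W'_{\overline\phi})$. For factorable $\phi = \otimes \phi_v$, Proposition \ref{RS} gives
$$
\Psi(s,W_\Phi,W'_{\overline\phi}) \Psi(w,W_\phi) \;=\; \prod_v \Psi_v(s,W_{\Phi_v},W'_{\overline{\phi_v}})\,\Psi_v(w,W_{\phi_v}).
$$
At every finite $v$ outside $S$ the normalized spherical values are $p_v^{d_v(3s-5/2)}L(s,\Pi_v\times\pi_v)$ and $p_v^{d_v(w-1/2)}L(w,\pi_v)$; multiplying over such $v$ reconstructs $\Lambda(s,\Pi\times\pi)\Lambda(w,\pi)$ up to a global factor $d_F^{3s+w-3}$, which is precisely what is cancelled by the $p_v^{d_v(3-3s-w)}$ inside the definition of $H_v$.

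\textbf{Step 3: Summing over factorable orthonormal bases.} Choose for each $v$ an orthogonal basis $\mathcal{B}^W(\pi_v)$ of $\mathcal{W}(\pi_v,\psi_v)$ (say, $\vartheta_v$-orthonormal), form the factorable vectors $\widetilde\phi=\otimes \widetilde\phi_v$, and use \eqref{relation between inner products} together with \eqref{Whittaker-product} to relate $\langle \widetilde\phi,\widetilde\phi\rangle_{L^2(\mathbf{X})}$ (resp.\ $\langle \widetilde\phi,\widetilde\phi\rangle_{\widetilde{Eis}}$) to $2 d_F^{1/2}\Lambda(1,\mathrm{Ad}\,\pi)$ (resp.\ $d_F^{1/2}\Lambda^\ast(1,\mathrm{Ad}\,\pi)$). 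Parseval then turns the sum over an orthonormal basis into a sum $\sum_{\widetilde\phi} \langle \mathcal{A}_s\Phi,\widetilde\phi\rangle \Psi(w,W_{\widetilde\phi})/\langle \widetilde\phi,\widetilde\phi\rangle$, which factorizes as a product of local sums. Collecting this with the Euler factorization of Step~2 yields, in the cuspidal part,
$$
\sum_{\phi\in\mathcal{B}_c(\pi)} \langle \mathcal{A}_s\Phi,\phi\rangle \Psi(w,W_\phi) \;=\; \frac{d_F^{3s+w-7/2}}{2}\,\frac{\Lambda(s,\Pi\times\pi)\Lambda(w,\pi)}{\Lambda(1,\mathrm{Ad}\,\pi)}\, H(\pi),
$$
and analogously for the Eisenstein part, where the extra $1/2$ from \eqref{relation between inner products} cancels with the factor $1/(4\pi)$ in front of the $t$-integral to produce the $dt/(2\pi)$ of \eqref{E(H)-def}. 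Multiplying the whole identity by $2 d_F^{7/2-3s-w}$ absorbs the $d_F$-powers and the factor $\tfrac12$ exactly, leaving $\mathcal{C}_{s,w}(\Phi)+\mathcal{E}_{s,w}(\Phi)=\mathcal{M}_{s,w}(\Phi)$ on the spectral side.

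\textbf{Step 4: The degenerate term.} Unfolding $(\mathcal{A}_s\Phi)_0$ from \eqref{AsPhi}, one computes
$$
I(w,(\mathcal{A}_s\Phi)_0) \;=\; \int_{F^\times \backslash \mathbb{A}^\times}\!\int_{F\backslash \mathbb{A}}\!\int_{F^\times\backslash \mathbb{A}^\times} \Phi\begin{pmatrix} z(u)n(x)a(y) & \\ & 1 \end{pmatrix} |u|^{2s-1}|y|^{s+w-1}\,\mathrm{d}^\times u\,\mathrm{d} x\,\mathrm{d}^\times y,
$$
so that $\mathcal{D}_{s,w}(\Phi) = 2 d_F^{7/2-3s-w} I(w,(\mathcal{A}_s\Phi)_0)$ is exactly what is needed to match \eqref{D-def}. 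Combining Steps 1--3 with this identification gives the stated formula.

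The main obstacle is the bookkeeping of the global $d_F$-powers together with the factor~$2$: the mismatch between the $L^2$ and $\widetilde{Eis}$ normalizations in \eqref{relation between inner products} must exactly balance the $1/(4\pi)$ coefficient of the continuous spectrum and the factor $2$ in front of $d_F^{7/2-3s-w}$. Beyond this, everything is routine once one has verified the rapid decay of $\mathcal{A}_s\Phi$ (so that Proposition \ref{spectral theorem} applies with uniform convergence allowing the $\psi$- and $y$-integrations to be moved inside the spectral sum) and the absolute convergence of the $s$- and $w$-integrals, both of which hold for $\Re(s),\Re(w)$ large.
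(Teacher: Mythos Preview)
Your proposal is correct and follows essentially the same route as the paper: apply the spectral theorem to $\mathcal{A}_s\Phi$, pass to Whittaker functions to kill the residual spectrum, Mellin-integrate to separate $I(w,\mathcal{A}_s\Phi)$ from the constant-term piece, unfold $\langle \mathcal{A}_s\Phi,\phi\rangle$ into $I(s,\Phi,\overline{\phi})$ and factor via Proposition~\ref{RS}, then sum over a factorable orthonormal basis using \eqref{relation between inner products}. Your explicit bookkeeping of the $d_F$-powers and the factor $2$ (which the paper summarizes only by the remark that ``the slightly awkward normalization is justified by the last part of Proposition~\ref{RS}'') is accurate and in fact makes the argument more transparent than the paper's own presentation.
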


We will refer to the function $H$ given by \eqref{H-def} where $\Phi=\otimes \Phi_v\in\Pi$ as the $(s,w)-$weight function of kernel $\Phi$. If $s$ and $w$ and $\Phi$ are clear from context, we shall refer to it simply as the weight function of kernel $\Phi$. 

Finally, given $s,w\in\Cc$, if $H$ is the $(s,w)-$weight function with kernel $\Phi$, we let $\widecheck{H}$ be the $(s',w')-$weight function associated to $\widecheck{\Phi}$, where $s'$ and $w'$ are as in \eqref{s'w'} and $\widecheck{\Phi}$ is as in \eqref{check-Phi}. In other words,

\begin{equation}\label{H-check}
\widecheck{H}(\pi)=\prod_v\widecheck{H}_v(\pi_v),\;\; \widecheck{H}_v(\pi):=p_v^{d_v(3-3s'-w')}\sum_{W\in\mc{B}^W(\pi_v)}\frac{\Psi_v(s',W_{\widecheck{\Phi}_v},\overline{W})\Psi_v(w',W)}{L(s',\Pi_v\times \pi_v)L(w',\pi_v)}.
\end{equation}

\section{Local computations}\label{local-comp}

Let $\Pi$ be an unramified cuspidal automorphic representation of $\PGL(3)$ over $F$. For all $v$, we let $\Phi_v^{0}$ correspond to the normalized spherical vector in the Whittaker model, that is $W_{\Phi^0_v}=W_{\Pi_v}$. Let $\mf{q}$ and $\mf{l}$ be two coprime integral ideals of $F$. Finally, let $\Phi^{\mf{q},\mf{l}}=\otimes_v\Phi^{\mf{q},\mf{l}}_v$ where, for all $v\nmid \mf{q}\mf{l}$, we put $\Phi^{\mf{q},\mf{l}}_v=\Phi^{0}_v$, for $v\mid \mf{q}$,

\begin{equation}\label{Phi-q}
  \Phi_v^{\mf{q},\mf{l}}(g):=\frac{1}{p_v^n}\sum_{\beta\,\in\, \mf{m}_v^{-n}/\mf{o}_v}\Phi^0_v\left(g\mtrx{1&&\beta\\&1&\\&&1}\right),
\end{equation}
where $n=v(\mf{q})$, and, for $v\mid \mf{l}$,

$$
\Phi_v^{\mf{q},\mf{l}}(g):=\frac{1}{p_v^m}\sum_{\beta\,\in\, \mf{m}_v^{-m}/\mf{o}_v}\Phi^0_v\left(g\mtrx{1&\beta &\\&1&\\&&1}\right),
$$
with $m=v(\mf{l})$. 


We will now proceed to the calculation of $H_v$ for $\Phi=\Phi^{\mathfrak{q},\mathfrak{l}}$. First notice that if, for some compact group $K'_v$ of $\GL_2(F_v)$ we have that $\Phi_v$ is invariant on the right by matrices of the shape $ \begin{pmatrix}k&\\&1  \end{pmatrix}$ ,where $k\in K'_v$, then we may restrict the sum over the basis $\mathcal{B}^W(\pi_v)$ to a sum over a basis of the right $K'_v$-invariant vectors. In particular, if $v<\infty$ and $v\nmid \mathfrak{q}\mathfrak{l}$, this basis will have only one element, which can be taken to be normalized spherical. Thus, by Proposition \ref{RS}, we see that $H_v(\pi_v)=1$ in those cases. We divide the remaining cases in three subcategories: $v\mid \mathfrak{l}$, $v\mid \mathfrak{q}$ and $v\mid \infty$ and treat them in that order.

\subsection{Non-archimedean case I: $v\mid\mf{l}$}\label{v-mid-l}

Even though this is not obvious at first glance, we will show that $H_v$ vanishes unless $\pi_v$ is unramified. First, notice that by right $\GL(2)$-invariance of the Whittaker norm, we have that for every orthonomral basis $\mathcal{B}$ of $\mathcal{W}(\pi_v,\psi_v)$, one may construct another one by taking $\mathcal{B}':=\{\pi_v(h)W,W\in\mathcal{B}\}$. Applying this for $h=\mtrx{1&\beta\\&1}$ for $\beta\in \mf{m}_v^{-m}/\mf{o}_v$, changing variables in the $\GL(3)\times\GL(2)$ Rankin-Selberg integral and summing over $\beta$, we deduce that

$$	
H_v(\pi_v)=p_v^{d_v(3-3s-w)}\sum_{W\in \mc{B}^W(\pi_v)}\frac{\Psi_v(s,W_{\Pi_v},\overline{W})\Psi_v(w,W^{(m)})}{L(s,\Pi_v\times\pi_v)L(w,\pi_v)},
$$
where
$$
W^{(m)}(h):=p^{-m}\sum_{\beta\in \mf{m}_v^{-m}/\mf{o}_v}W\left(h\mtrx{1& -\beta\\&1}\right).
$$
Now, since $W_{\Pi_v}$ is spherical, we may restrict the sum over $\mathcal{B}^W(\pi_v)$ to only one term for which $W$ is the normalized spherical vector. Now by Proposition \ref{RS}, $\Psi_v(s,W_{\Pi_v},\overline{W_{\pi_v}})=p_v^{d_v(3s-5/2)}L(s,\Pi_v\times \pi_v)$ and

\begin{align}\label{local-d-div-l}\Psi_v(w,W^{(m)}_{\pi_v})&=\int_{F_v^{\times}}\delta_{v(y)\geq m-d}W_{\pi_v}\mtrx{y&\\&1}|y|^{w-\frac12}\d^{\times} y,\notag\\
&=p_v^{d_v(w-\frac12)}\sum_{\mu\geq m}\frac{\lambda_{\pi_v}(\mu)}{p_v^{\mu w},}=p_v^{-mw}\left(\lambda_{\pi_v}(m)-\frac{\lambda_{\pi_v}(m-1)}{p_v^w}\right)p_v^{d_v(w-\frac12)}L(w,\pi_v).
\end{align}
Hence we have that
\begin{equation}\label{Hv-for-l}
  H_v(\pi_v)=p_v^{-mw}\left(\lambda_{\pi_v}(m)-\frac{\lambda_{\pi_v}(m-1)}{p_v^{w}}\right).
\end{equation}

\subsection{Non-archimedean case II: $v\mid \mf{q}$}\label{v-mid-q}

We will show that $H_v(\pi_v)$ vanishes unless $c(\pi_v)\leq n$, that $H_v(\pi_v)\ll_{\epsilon} p_v^{n(\theta-1+\epsilon)}$ and if $c(\pi_v)=n$, $H_v(\pi_v)=\varphi(p_v^n)p_v^{-2n}$.

We first notice that by a result of Casselman \cite{Casselman1973some}, if we let $W_0=W_{\pi_v}$ be the newvector and for each $j\geq 0$, we let

\begin{equation}\label{Wj}
W_j:=\pi_v\mtrx{1&\\&\varpi_v^j}W_0.
\end{equation}
Then, for each $j\geq 0$, $\left\{W_0,\,W_1,\ldots W_j\right\}$ is a basis for the $K_v[n_0+j]$-invariant vectors in $\mc{W}(\pi_v,\psi_v)$, where $n_0=c(\pi_v)$. We now construct an \textit{orthonormal} basis by employing the process of Gram-Schmidt. This is the local counterpart of the method in \cite{BM2015second}.

Let $\lambda_{\pi_v}=\lambda_{\pi_v}(1)$ be as in \S \ref{newforms}, $\delta_{\pi_v}=\delta_{n_0=0}$ and take $\alpha_{\pi_v}:=\frac{\lambda_{\pi_v}}{\sqrt{p_v}(1+\frac {\delta_{\pi_v}}{p_v})}$. We put

$$
\xi_{\pi_v}(0,0)=1,\;\;\;\xi_{\pi_v}(1,1)=\frac1{\sqrt{1-\alpha_{\pi_v}^2}},\;\;\;\xi_{\pi_v}(1,0)=-\alpha_{\pi_v}p_v^{1/2}\xi_{\pi_v}(1,1),
$$
and
$$
\xi_{\pi_v}(j,j)=\frac1{\sqrt{1-\alpha_{\pi_v}^2}\sqrt{1-\frac{\delta_{\pi_v}}{p_v^2}}},\;\;\;\xi_{\pi_v}(j,j-1)=-\lambda_{\pi_v}\xi_{\pi_v}(j,j),\;\;\;\xi_{\pi_v}(j,j-2)=\delta_{\pi_v}\xi_{\pi_v}(j,j).
$$
and $\xi_{\pi_v}(j,k)=0$ for $k\leq j-2$. If one assumes any non-trivial bound towards ramanujan $\lambda_{\pi_v}\ll p_v^{\vartheta}$, with $\vartheta< 1/2$, one has that $|\alpha_{\pi_v}|$ is uniformly bounded by some constant $C_{\vartheta} <1$ and therefore
\begin{equation}\label{xi-ramanujan}
\xi_{\pi_v}(j,k)\ll p_v^{j\epsilon}p_v^{(j-k)\vartheta}.
\end{equation}
More importantly, for $j\geq 0$, $\{\widetilde{W_0},\widetilde{W_1},\ldots,\widetilde{W_j}\}$ is an orthonormal basis for the space of $K_v[n_0+j]$-vectors in $\mathcal{W}(\pi_v,\psi_v)$, where
\begin{equation}\label{tilde-basis}
  \widetilde{W_j}:=\frac{1}{\IP{W_0,W_0}^{1/2}}\sum_{k=1}^j\xi_{\pi_v}(j,k)p_v^{\frac{k-j}{2}}W_k.
\end{equation}
To see this we first compute $\IP{W_{k_1},W_{k_2}}$, which, by \eqref{Wj} and the definition of $\lambda_{\pi_j}$, equals

$$
p^{-\frac{|k_2-k_1|}{2}}S_{|k_2-k_1|},
$$
where, for $t\geq 0$,
$$
S_t=\frac{\zeta_v(2)}{L_v(1,\pi_v\times \overline{\pi_v})}\sum_{\nu \geq 0}\frac{\lambda_{\pi_v}(\nu)\lambda_{\pi_v}(\nu+t)}{p_v^{\nu}}.
$$
It follows from the Hecke relations for $\lambda_{\pi_v}(\nu)$ that
$$
S_t=\lambda_{\pi_v}S_{t-1}-\delta_{\pi_v}S_{t-2}\text{, for $t\ge 2$ and }S_1=\alpha_{\pi_v} p_v^{1/2}S_0,
$$
from what the claim follows.

By definition, we have

\begin{align*}
W_{\Phi_v}\mtrx{a&b&\\c&d&\\&&1}=\frac{1}{p_v^n}\sum_{\beta\,\in\,\mf{m}_v^{-n}/\mf{o_v}}\psi(\beta c)W_{\Pi_v}\mtrx{a&b&\\c&d&\\&&1}\\
=\delta_{v(c)\geq n-d_v}W_{\Pi_v}\mtrx{a&b&\\c&d&\\&&1}.
\end{align*}

Hence, if we write $h=z(u)n(x)a(y)k$, with $x\in F_v$, $u,\, y\in F_v^{\times}$ and $k=(k_{ij})\in K_v$, then $W_{\Phi_v}$ vanishes unless $v(uk_{21})\geq n-d_v$. Letting $d_1:=\min(n,v(u)+d_v)$ and $d_2:=n-d_1$, we see that this is equivalent to $k$ belonging to  $K_v[d_2]$. This allows us to write
\begin{equation}\label{using-choice-Phi}
  \Psi_v(s,W_{\Phi_v},\overline{W})=p_v^{d_v(2s-1)}\sum_{d_1+d_2=n}\sum_{\min(\nu_1,n)=d_1}p_v^{-2\nu_1(s-\frac12)}\Psi_{\nu_1,d_2}(W),
\end{equation}
where

$$
\Psi_{\nu_1,d_2}(W)=\int_{F_v^{\times}}\int_{K_v[d_2]}W_{\Pi_v}\left(z(\varpi_v^{\nu_1-d_v})a(y)\right)\overline{W}(a(y)k)|y|^{s-\frac32}\d^{\times}y\,\d k.
$$

Now, if $W=\widetilde{W_j}$ is an element of our basis, given by \eqref{tilde-basis}, then it follows that

\begin{equation}\label{integration-over-compact}
\int_{K_v[f]}\widetilde{W_j}(hk)\d k=
\begin{cases}
\operatorname{vol}(K_v[f])\widetilde{W_j}(h),\text{ if }j+n_0\leq f,\\
0,\text{ otherwise}.
\end{cases}
\end{equation}
We reason as follows: On the one hand, for every $j$, $\widetilde{W_j}$ is $K_v[n_0+j]$-invariant and is orthogonal to $\mc{W}(\pi_v,\psi_v)^{K_v[n_0+j-1]}$. On the other, the operator
$$
W\mapsto \frac{1}{\operatorname{vol}(K_v[f])}\int_{K_v[f]}\pi_v(k)W\d k
$$
is the orthogonal projection into the space of $K_v[f]$-invariant vectors.

Applying \eqref{using-choice-Phi} and \eqref{integration-over-compact} to the definition of $H_v(\pi_v)$ and changing order of summation, we are led to

\begin{align}\label{last-before-sum}
  H_v(\pi_v)=\frac{p_v^{d_v(2-s-w)}}{L(s,\Pi_v\times\pi_v)L(w,\pi_v)}\sum_{d_1+d_2=n}\operatorname{vol}(K_v[d_2])\sum_{j\leq d_2-n_0}\sum_{\min(\nu_1,n)=d_1}p_v^{-2\nu_1(s-1/2)}\notag\\
\times\int_{F_v^{\times}}W_{\Pi_v}\mtrx{z(\varpi_v^{\nu_1-d_v})a(y)&\\&1}\overline{\widetilde{W}_j}(a(y))|y|^{s-3/2}\d^{\times}y\Psi_v(w,\widetilde{W_j}),
\end{align}
By letting $\lambda_{\pi_v,j}(\mu)=\widetilde{W_j}(a(\varpi_v^{\mu-d_v}))p^{\frac{\mu}{2}}$ and using \eqref{only-small}, we see that
\begin{multline}\label{integral-fixed-nu1}
\int_{F_v^{\times}}W_{\Pi_v}\mtrx{z(\varpi_v^{\nu_1-d_v})a(y)&\\&1}\overline{\widetilde{W}_j}(a(y))|y|^{s-3/2}\d^{\times}y\\
=p_v^{d_v(s-\frac32)}\sum_{\nu_2\geq 0}\lambda_{\pi_v}(\nu_2,\nu_1)\overline{\lambda_{\pi_v,j}(\nu_2)}p_v^{-\nu_1}p_v^{-\nu_2s}
\end{multline}
and also,
\begin{equation}\label{using-def-lambdaj}
\Psi_v(w,\widetilde{W_j})=p_v^{d_v(w-\frac12)}\sum_{\mu\geq 0}\lambda_{\pi_v,j}(\mu)p_v^{-\mu w}.
\end{equation}
Inserting \eqref{integral-fixed-nu1} and \eqref{using-def-lambdaj} in \eqref{last-before-sum}, we deduce that

\begin{multline*}
  H_v(\pi_v)=\frac{1}{L(s,\Pi_v\times \pi_v)L(w,\pi_v)}\sum_{d_1+d_2=n}\operatorname{vol}(K_v[d_2])\sum_{j\leq d_2-n_0}\\
\sum_{\min(\nu_1,n)=d_1}\sum_{\nu_2\geq 0}\sum_{\mu\geq 0}\frac{\lambda_{\Pi_v}(\nu_2,\nu_1)\overline{\lambda_{\pi_v,j}(\nu_2)}\lambda_{\pi_v,j}(\mu)}{p_v^{(2\nu_1+\nu_2)s}p_v^{\mu w}}.
\end{multline*}

Combining \eqref{Wj}, \eqref{tilde-basis}, 
we get
$$
\lambda_{\pi_v,j}(\nu)=\IP{W_0,W_0}^{-1/2}\sum_{k=1}^{\min(j,k)}\xi_{\pi_v}(j,k)p_v^{k-\frac j2}\lambda_{\pi_v}(\nu-k)\delta_{\nu\ge k}.
$$

As a consequence, we deduce, after changing variables that

\begin{align*}
  H_v(\pi_v)=\frac{  \IP{W_0,W_0}^{-1}}{L(s,\Pi_v\times \pi_v)L(w,\pi_v)}\sum_{d_1+d_2=n}\operatorname{vol}(K_v[d_2])\sum_{j\leq d_2-n_0}p_v^{-j}\sum_{k_1,k_2\leq j}\xi_{\pi_v}(j,k_1)\xi_{\pi_v}(j,k_2)\\
\times p_v^{k_1(1-w)}p_v^{k_2(1-s)}p_v^{-2d_1s}\sum_{\min(\nu_1,d_2)=0}\sum_{\nu_2\geq 0}
\frac{\lambda_{\Pi_v}(\nu_2+k_2,\nu_1+d_1)\overline{\lambda_{\pi_v}(\nu_2)}}{p_v^{(2\nu_1+\nu_2)s}}\sum_{\mu\geq 0}\frac{\lambda_{\pi_v}(\mu)}{p_v^{\mu w}}.
\end{align*}

We recognize the last sum as $L(w,\pi_v)$, so that 

\begin{multline}\label{last-of-long-sums}
  H_v(\pi_v)=\frac{\IP{W_0,W_0}^{-1}}{L(s,\Pi_v\times \pi_v)}\sum_{d_1+d_2=n}\operatorname{vol}(K_v[d_2])\sum_{j\leq d_2-n_0}p_v^{-j}\sum_{k_1,k_2\leq j}\xi_{\pi_v}(j,k_1)\xi_{\pi_v}(j,k_2)\\
\times p_v^{k_1(1-w)}p_v^{k_2(1-s)}p_v^{-2d_1s}\sum_{\min(\nu_1,d_2)=0}\sum_{\nu_2\geq 0}
\frac{\lambda_{\Pi_v}(\nu_2+k_2,\nu_1+d_1)\overline{\lambda_{\pi_v}(\nu_2)}}{p_v^{(2\nu_1+\nu_2)s}}.
\end{multline}

We are now ready to prove the following

\begin{prop}\label{prop-local-comp}
Let $\Phi_v=\Phi_v^{\mf{q},\mf{l}}$ be as in \eqref{Phi-q} and let  $n=v(\mf{q})$. Then for every $\epsilon>0$ there exists $\delta>0$ such that
\begin{enumerate}[(i)]
  \item $H_v(\pi_v)$ vanishes if $c(\pi_v) >n$.
  \item $H_v(\pi_v)=\varphi(p_v^n)p_v^{-2n}$ if $c(\pi_v) = n$.
\item $H_v(\pi_v)\ll_{\epsilon} p_v^{n(\theta-1+\epsilon)}$ in general for $\Ree(s),\Ree(w)>\frac12-\delta$ and $\delta>0$ sufficiently small.
\end{enumerate}
\end{prop}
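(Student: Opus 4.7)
All three statements will be read off from the closed form \eqref{last-of-long-sums}. The support and size of that multiple sum, subject to the constraints $d_1+d_2=n$, $0\le j\le d_2-n_0$, $0\le k_1,k_2\le j$, and $\min(\nu_1,d_2)=0$, essentially encode the entire proposition. Part (i) is immediate: if $n_0=c(\pi_v)>n$ then, since $d_2\le n$, the condition $j\le d_2-n_0$ is never satisfiable, so every $j$-sum is empty and $H_v(\pi_v)=0$. For part (ii), with $n_0=n$, the same constraint forces the unique surviving decomposition $(d_1,d_2,j)=(0,n,0)$, hence $k_1=k_2=0$ with $\xi_{\pi_v}(0,0)=1$, and $\min(\nu_1,n)=0$ gives $\nu_1=0$. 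The inner sum collapses to $\sum_{\nu_2\ge 0}\lambda_{\Pi_v}(\nu_2,0)\overline{\lambda_{\pi_v}(\nu_2)}/p_v^{\nu_2 s}$, which equals $L(s,\Pi_v\times\pi_v)$ by \eqref{L-factor} (using the trivial central character of $\pi_v$, so that $\overline{\pi_v}$ has the same $L$-function as $\pi_v$). After cancellation with the denominator, $H_v(\pi_v)=\operatorname{vol}(K_v[n])\,\IP{W_0,W_0}^{-1}$, and this combination evaluates to $\varphi(p_v^n)p_v^{-2n}$ using the standard index $[K_v:K_v[n]]=p_v^{n-1}(p_v+1)$ together with the recursion for the $S_t$-sequence introduced just after \eqref{tilde-basis}, which determines the Whittaker square-norm of the newvector of a representation of conductor exactly $p_v^n$.

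Part (iii) is the main analytic task. I would bound each factor in \eqref{last-of-long-sums} individually: $\operatorname{vol}(K_v[d_2])\ll p_v^{-d_2}$; the Gram--Schmidt coefficients $\xi_{\pi_v}(j,k)$ via \eqref{xi-ramanujan}; and the partial $L$-sum $\sum_{\nu_2}\lambda_{\Pi_v}(\nu_2+k_2,\nu_1+d_1)\overline{\lambda_{\pi_v}(\nu_2)}p_v^{-\nu_2 s}$ absolutely, using the Hecke bound $|\lambda_{\pi_v}(\nu)|\ll p_v^{\nu(\theta+\epsilon)}$ and the corresponding bound for the fixed $\Pi$. Choosing $\delta>0$ sufficiently small keeps all geometric series convergent for $\Ree(s),\Ree(w)>1/2-\delta$. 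The principal $p_v^n$-scaling comes from $\operatorname{vol}(K_v[d_2])\cdot p_v^{-2d_1\Ree(s)}\ll p_v^{-d_2-d_1}=p_v^{-n}$ at $\Ree(s)=1/2$, combined with the Ramanujan-type growth $p_v^{n(\theta+\epsilon)}$ absorbed from the shifted Hecke eigenvalues; the sum over the $O(n)$ decompositions $d_1+d_2=n$ and over $j,k_1,k_2$ only contributes an extra $p_v^{n\epsilon}$.

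The main obstacle is the bookkeeping in part (iii): tracking the interaction between the growth $p_v^{(j-k)\theta+j\epsilon}$ in the Gram--Schmidt coefficients, the ``reciprocal'' inflation $p_v^{k_i(1-\Ree(s))}$, and the $p_v^{-j}$ normalization, to ensure the worst $(j,k_1,k_2)$ never overshoots the $p_v^{n(\theta-1+\epsilon)}$ target. Particular care is needed when $d_1>0$ (where $\nu_1$ can contribute to the inner sum with weight $p_v^{-2\nu_1\Ree(s)}$) so that no extra $p_v^n$-factor sneaks in; resolving this likely requires estimating the partial $L$-sum more delicately than by crude absolute values, exploiting the expression of $\lambda_{\Pi_v}$ as Schur polynomials in the fixed Langlands parameters of $\Pi_v$.
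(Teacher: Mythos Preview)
Your arguments for parts (i) and (ii) match the paper's proof essentially verbatim, including the reduction to the single surviving term $(d_1,d_2,j,k_1,k_2)=(0,n,0,0,0)$ and the identification of the remaining $\nu_2$-sum with $L(s,\Pi_v\times\pi_v)$ via \eqref{L-factor}.

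For part (iii) there is a genuine gap. First, you conflate the $\GL(3)$ temperedness parameter $\theta$ (attached to the fixed $\Pi$) with the $\GL(2)$ Ramanujan exponent $\vartheta$ (attached to $\pi_v$); the Gram--Schmidt estimate \eqref{xi-ramanujan} reads $\xi_{\pi_v}(j,k)\ll p_v^{j\epsilon+(j-k)\vartheta}$, not $(j-k)\theta$. More importantly, your closing paragraph anticipates that crude absolute values will not close and that one must exploit the Schur-polynomial structure of $\lambda_{\Pi_v}$. This is not the case: the paper bounds every factor in \eqref{last-of-long-sums} by absolute value and nothing more. The point you are missing is purely numerical. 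At $\Ree(s)=\Ree(w)=\tfrac12$ the product
\[
p_v^{-j}\cdot\big|\xi_{\pi_v}(j,k_1)\xi_{\pi_v}(j,k_2)\big|\cdot\big|p_v^{k_1(1-w)+k_2(1-s)}\big|
\]
is bounded by $p_v^{2j\epsilon}\,p_v^{(k_1+k_2-2j)(1/2-\vartheta)}$, and since $k_1+k_2\le 2j$ and $\vartheta<\tfrac12$, this is at most $p_v^{2n\epsilon}$. Hence the $(j,k_1,k_2)$-sum is harmless, and the only genuine growth is the factor $p_v^{(d_1+k_2)\theta}\le p_v^{n\theta}$ coming from the shifted $\GL(3)$ Hecke eigenvalues, exactly the contribution you already isolated. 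For the convergence of the inner $(\nu_1,\nu_2)$-series one needs $\Ree(s)>\theta+\vartheta$; the paper invokes Luo--Rudnick--Sarnak for $\GL(3)$ together with the Kim--Sarnak bound for $\GL(2)$ to secure $\theta+\vartheta<\tfrac12$, and then takes $\delta$ small enough. No finer structure of $\lambda_{\Pi_v}$ is required, and your worry about the case $d_1>0$ is unfounded: when $d_2>0$ the constraint $\min(\nu_1,d_2)=0$ forces $\nu_1=0$, while when $d_2=0$ the $j$-sum collapses to the single term $j=0$.
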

 
The first assertion follows by observing that if $n_0=c(\pi_v)>n$ then the sum over $j$ in \eqref{last-of-long-sums} will vanish independently of the value of $d_2$.

The second one holds because if $n_0=n$, we automatically have $d_2=n$ and $d_1=j=k_1=k_2=0$ and moreover, we have

$$
\IP{W_0,W_0}=\frac{\zeta_v(2)}{L_v(1,\pi_v\times\overline{\pi_v})}\sum_{n\ge 1}\frac{|\lambda_{\pi_v}(n)|^2}{p_v^{n}}=
\begin{cases}
  1,&\text{ if }n_0=0,\\
  \zeta_v(2),&\text{ otherwise.}
\end{cases}
$$
Hence,

$$
H_v(\pi_v)=\frac{\varphi(p_v^n)p_v^{-2n}}{L(s,\Pi_v\times \pi_v)}\sum_{\nu_2\geq 0}\lambda_{\Pi_v}(\nu_2,0)\lambda_{\pi_v}(\nu_2),
$$
and we conclude by \eqref{L-factor}.

Finally, in order to show \textit{(iii)}, we apply the estimate in \eqref{xi-ramanujan} and the bounds
$$
\lambda_{\Pi_v}(\nu_1,\nu_2)\ll p_v^{(\nu_1+\nu_2)\theta},\;\;\;\lambda_{\pi_v}(\nu)\ll p_v^{\nu\vartheta}
$$
to \eqref{last-of-long-sums}, which gives

$$
H_v(\pi_v)\ll_{\epsilon} p_v^{n(-1+\epsilon)}\sum_{d_1+d_2=n}p^{2(d_1+j)\delta}\sum_{j=0}^{d_2-n_0}\sum_{k_1,k_2=0}^{j}p_v^{(k_1+k_2 -2j)(\frac12-\vartheta)}p_v^{(d_1+k_2)\theta}
$$
for $\Re(s)>\frac{\theta}{2},\theta+\vartheta$ and it follows from the results in \cite{LRS1999ramanujan} and the Kim-Sarnak bound \cite[Appendix 2]{Kim2003functoriality} that one has $\theta+\vartheta<1/2$. We conclude by taking $\delta$ sufficiently small.

\subsection{Local computations, the archimedean case}\label{local-Archimedean}

The analysis of the archimedean weight functions is of a somewhat different nature from the non-archimedean case. For those places, we make the simplest choice imaginable. Namely we impose that $\Pi_v$ is unramified and $\Phi_v$ is normalized spherical for every archimedean place $v$. As a consequence it easily follows that $H_v(\pi_v)$ vanishes unless $\pi_v$ is itself unramified, in which case we may choose a basis of ${B}^W(\pi_v)$ such that each term corresponds to a different $K$-type, then there will be at most one element $W$ of $\mathcal{B}^W(\pi_v)$ for which the period $\Psi_v(s,W_{\Phi_v},W)$ is non-vanishing and it must be a spherical vector for $\pi_v$.  Moreover, it follows from Stade's formula \cite[Theorem 3.4]{Stade2001Mellin} that
$$
\Psi_v(s,W_{\Phi_v},\overline{W})=L_v(s,\Pi_v\times \pi_v),\;\Psi_v(w,W)=L_v(w,\pi_v).
$$
where $W=W_{\pi_v}\in\mc{W}(\pi_v,\psi_v)$ is spherical and such that $\vartheta_v(W,W)=1$. In particular, the following holds:

\begin{prop}\label{Archimedean calculation}
Let $v$ be an archimedean place of $F$. Let $\Pi_v$ be an irreducible admissible generic representation for $\GL_3(F_v)$, then there exists a vector $\Phi_v\in\Pi_v$ such that for every irreducible admissible generic representation for $\GL_2(F_v)$, we have

$$
H(\pi_v)=
\begin{cases}
1,\text{ if }\pi_v\text{ is unramified},\\
0,\text{ otherwise}.
\end{cases}
$$
\end{prop}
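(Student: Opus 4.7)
The plan is to take $\Phi_v$ to be the normalized spherical vector in $\Pi_v$, so that $W_{\Phi_v}$ is right-invariant under the maximal compact $\widetilde{K}_v \subset \GL_3(F_v)$. The key symmetry exploited throughout is that $\widetilde{K}_v$ contains the block-diagonal embedding of the maximal compact $K_v \subset \GL_2(F_v)$, so that
\[
W_{\Phi_v}\begin{pmatrix} hk & \\ & 1 \end{pmatrix} = W_{\Phi_v}\begin{pmatrix} h & \\ & 1 \end{pmatrix} \qquad \text{for all } k \in K_v,\ h \in \GL_2(F_v).
\]

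First, I would use this invariance inside the integral defining $\Psi_v(s, W_{\Phi_v}, \overline{W})$: changing variables $h \mapsto hk$ and averaging over $k \in K_v$ yields
\[
\Psi_v(s, W_{\Phi_v}, \overline{W}) = \int_{N_2(F_v) \backslash \GL_2(F_v)} W_{\Phi_v}\begin{pmatrix} h & \\ & 1 \end{pmatrix} \left( \int_{K_v} \overline{W}(hk)\, dk \right) |\det h|^{s-\frac12}\, dh,
\]
so only the $K_v$-spherical projection of $\overline W$ can contribute. Next, I would choose the basis $\mathcal{B}^W(\pi_v)$ to refine the decomposition of $\mathcal{W}(\pi_v,\psi_v)$ into $K_v$-isotypic components. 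By Schur orthogonality, the $K_v$-average of any basis vector lying in a non-trivial isotype is zero, so such vectors contribute $0$ to the sum in \eqref{H-def}. If $\pi_v$ is ramified, it admits no $K_v$-fixed vector, every summand vanishes, and one obtains $H_v(\pi_v) = 0$.

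In the unramified case, the space of $K_v$-fixed vectors in $\mathcal{W}(\pi_v,\psi_v)$ is one-dimensional, and I would take as the sole contributing basis vector the unique spherical Whittaker function $W_{\pi_v}$ normalized by $\vartheta_v(W_{\pi_v}, W_{\pi_v}) = 1$. Stade's formula \cite[Theorem 3.4]{Stade2001Mellin} then evaluates the two remaining Rankin--Selberg periods as
\[
\Psi_v(s, W_{\Phi_v}, \overline{W_{\pi_v}}) = L_v(s, \Pi_v \times \pi_v), \qquad \Psi_v(w, W_{\pi_v}) = L_v(w, \pi_v).
\]
Since $d_v = 0$ at archimedean places, the prefactor $p_v^{d_v(3-3s-w)}$ in \eqref{H-def} equals $1$, and the two $L$-factors in the numerator cancel the denominator exactly, giving $H_v(\pi_v) = 1$.

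The main obstacle I anticipate is verifying that the normalization under which Stade's formula outputs the local $L$-factor on the nose is compatible with the Whittaker inner-product normalization $\vartheta_v(W,W) = 1$ used to define the orthonormal basis; this is essentially an archimedean bookkeeping issue, somewhat more delicate at complex places, but it does not introduce spurious constants under the standard conventions. This is precisely the archimedean step that is trivialized by the sphericity assumption on $\Phi_v$, which is the technical restriction emphasized in the introduction.
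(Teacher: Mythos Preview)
Your proposal is correct and follows essentially the same approach as the paper: choose $\Phi_v$ to be the normalized spherical vector in (an unramified) $\Pi_v$, use the $K_v$-isotypic decomposition to see that only a spherical $W\in\mathcal{B}^W(\pi_v)$ can contribute, and then invoke Stade's formula to identify the surviving Rankin--Selberg integrals with the local $L$-factors. The paper states the $K$-type argument in one sentence and likewise appeals to Stade without verifying the normalization compatibility you flag; your averaging-over-$K_v$ computation is just a slightly more explicit version of the same step.
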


\subsection{Meromorphic continuation with respect to the spectral parameter}\label{merom-wrt-spec-param}

Let $\Phi^{\mathfrak{q},\mathfrak{l}}$ be as in \eqref{Phi-q} and let $H$ be the $(s,w)-$weight function of kernel $\Phi^{\mathfrak{q},\mathfrak{l}}$. Our goal in this section is to find that for any unitary character $\omega$ of $F_v^{\times}$ there is a domain of $\mathbb{C}^3$ on which the function
$$
(s,w,t)\mapsto H_v(\pi_v)
$$
is meromorphic with respect to all three variables with only finitely many polar divisors, where $\pi_v=\pi_v(\omega_v,it)$ (\textit{cf.} \S \ref{Ind-Eis}).

From our computations so far, we know that $H_v(\pi_v)=1$ unless $v\mid \mathfrak{l}$ or $v\mid \mathfrak{q}$. Moreover, in the first of these cases, we saw that

$$
H_v(\pi_v)=p_v^{-mw}\left(\lambda_{\pi_v}(m)-\frac{\lambda_{\pi_v}(m-1)}{p_v^w}\right),\;\;m=v(\mf{l}),
$$
which is clearly an entire function with respect to $s$, $w$ and $t$, since it is a combination of terms of the shape $p_v^{\alpha w +\beta t}$, $\alpha,\beta\in \Cc$.

We are now left with the case where $v\mid \mathfrak{q}$. It follows from the proof of Proposition \ref{prop-local-comp} that given $\eta<\frac12$, there exists $\delta>0$ such that the right-hand side of \eqref{last-of-long-sums} converges in the region

$$
|\Ima(t)|<\eta,\;\;\Ree(s),\Ree(w)>\frac12-\delta,
$$
and defines in it a holomorphic function in the variables $s$ and $w$. We observe that $H_v(\pi_v)$ is a linear combination of terms of the shape


$$
L_{\omega,k_2,d_1,d_2}(s,t):=\frac{1}{L(s,\Pi_v\times \pi_v)}\sum_{\min(\nu_1,d_2)=0}\sum_{\nu_2\geq 0}
\frac{\lambda_{\Pi_v}(\nu_2+k_2,\nu_1+d_1)\overline{\lambda_{\pi_v}(\nu_2)}}{p_v^{(2\nu_1+\nu_2)s}},
$$
with coefficients given by meromorphic functions in the variables $s$, $w$ and $t$. The only possible polar divisors occur for $t$ satisfying $\omega(\varpi_v)^2p_v^{2it}
=p_v^{\pm 1}$, due to the term $(1-\alpha^2_{\pi_v})^{-1}$ appearing as a factor of $\xi_{\pi_v}(j,k_1)\xi_{\pi_v}(j,k_2)$. Moreover, it follows from \cite[Lemma 14]{BlKh2019reciprocity}, applied to the tuple $(M,d,g_1,g_2,q)=(p_v^{k_2},p_v^{d_1},1,p_v^{d_2},p_v^n)$ that for any $\epsilon>0$, there exists $\delta>0$ such that $L_{\omega,k_2,d_1,d_2}(s,t)$ admits a holomorphic continuation to the region 
\begin{equation}\label{Region-Lkdd}
\Ree(s)>\frac14-\delta\;\;\Ree(s)\pm\Ima(t)>-\delta.
\end{equation}
Moreover, using again the Ramanujan bound for $\lambda_{\Pi_v}(\nu_2,\nu_2)$ and recalling that $\pi_v=\pi_v(\omega_v,it)$ so that $\lambda_{\pi_v}(\nu)\ll p_v^{\nu|\Ima(t)|+\epsilon}$,
we see that in Region \eqref{Region-Lkdd} we have 
\begin{equation}\label{bound-for-Lkdd}
L_{\omega,k_2,d_1,d_2}(s,w,t)\ll p_v^{(d_1+k_2)(\theta+\epsilon)}.
\end{equation}
As a consequence, $H_v(\pi_v)$ admits meromorphic continuation to \eqref{Region-Lkdd}.
Now, suppose $\omega=\mathbf{1}$ is the trivial character and let

\begin{equation}\label{G=F}
  D_v(s,w):=H_v(\pi_v)_{\big| t=(1-w)/i}. 
\end{equation}
From what we have just seen, $D_v(s,w)=1$ unless $v\mid \mathfrak{l}$ or $v\mid \mathfrak{q}$. In the first case, it is clear that $D_v(s,w)$ is entire with respect to both $s$ and $w$. Moreover, when $\frac12\leq \Ree(s),\,\Ree(w) < 1$, we have $\lambda_{\pi_v}(m)\ll p_v^{m(1-\Ree(w))}$, and thus by \eqref{Hv-for-l}, we see that
$$
D_v(s,w)\ll 1.
$$
Finally, if $v\mid \mathfrak{q}$, then for sufficiently small $\delta>0$, $D_v$ is meromorphic in the region
\begin{equation}\label{Region-G}
\frac12-\delta<\Ree(s),\Ree(w)\leq 1,
\end{equation}
where the only possible polar divisors are at the values of $w$ such that $p_v^{2-2w}=p_v$. We will now show that such poles cannot occur. To see this, let

$$
E_{j,k_2}(w):=\xi_{\pi_v}(j,k_2)\sum_{k_1=0}^j\xi_{\pi_v}(j,k_1)p_v^{k_1(1-w)},
$$
An easy computation shows that

$$
E_{j,k_2}(w)=
\begin{cases}
1,\text{ if }j=0,\\
\frac{t_{j,k_2}(w)}{(1-p_v^{2w-3})},\text{ if }j=1,\\
0,\text{ otherwise},
\end{cases}
$$
where $t_{j,k_2}$ is an entire function. This and the fact that $L_{\omega,k,d_1,d_2}(s,w,(1-w)/i)$ is  holomorphic in \eqref{Region-G} are enough to guarantee that $H_v(\pi_v)$ is holomorphic in the same region. Moreover, we may argue analogously to Proposition \ref{prop-local-comp} $(iii)$, appealing to \eqref{bound-for-Lkdd}, to deduce that for $\frac12\leq \Ree(s),\,\Ree(w)<1$, we have the inequality

$$
D_v(s,w)\ll_{s,w,\epsilon} p_v^{n(-1+\theta+\epsilon)}\sum_{d_1+d_2=n}\sum_{j=0}^{d_2-n_0}p_v^{j(1-2\Ree(w))}+p_v^{j(1-\Ree(s)-\Ree(w))}\ll_{\epsilon}  p_v^{n(-1+\theta+\epsilon)},
$$
We now summarize what we obtained in this subsection as follows:

\begin{prop}\label{meromorphic-t-variable}
  Let $\omega=\otimes_v'\omega_v$ be an unitary character of $F^{\times}\backslash\Aa^{\times}$ and let $H_v$ be  given by \eqref{H-def} with $\Phi_v = \Phi^{\mf{q},\mf{l}}_v$, with $\Phi^{\mf{q},\mf{l}}_v$ given by \eqref{Phi-q}. Then $(s,w,t)\mapsto H_v(\pi_v(\omega_v,it))$ admits meromorphic continuation to the region \eqref{Region-Lkdd} with possible polar divisor of the form $t=t_0$ where $t_0$ is a solution to $\omega(\varpi_v)^2p_v^{2it_0}
=p_v^{\pm 1}$. Moreover, if $D_v$ is given by \eqref{G=F}, then it admits a \textit{holomorphic} continuation to the region \eqref{Region-G} and if $\frac12\leq \Ree(s),\,\Ree(w)<1$, it satisfies
$D_v(s,w)=1$ unless $v\mid \mathfrak{q}\mathfrak{l}$, in which case,
$$
D_v(s,w)\ll_{s,w,\epsilon}
\begin{cases}
 p_v^{m\epsilon},\text{ if }v\mid \mathfrak{l},\\
 p_v^{n(-1+\theta+\epsilon)},\text{ if }v\mid \mathfrak{q}.\\
  
\end{cases}
$$
\end{prop}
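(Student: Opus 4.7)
The plan is to consolidate, case by case in the place $v$, the analysis already performed on $H_v(\pi_v(\omega_v,it))$ into the four conclusions of the proposition. When $v\nmid \mathfrak{q}\mathfrak{l}$ the vector $\Phi_v=\Phi_v^0$ is normalized spherical and the beginning of Section \ref{local-comp} gives $H_v\equiv 1$, which is trivially meromorphic and harmless for every estimate on $D_v$. It therefore suffices to treat $v\mid\mathfrak{l}$ and $v\mid\mathfrak{q}$ separately.

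For $v\mid\mathfrak{l}$, formula \eqref{Hv-for-l} expresses $H_v$ as a combination of terms $p_v^{-mw}\lambda_{\pi_v}(m-i)$, and the spherical Hecke eigenvalues $\lambda_{\pi_v(\omega_v,it)}(m-i)$ are Laurent polynomials in $\omega_v(\varpi_v)p_v^{\pm it}$; hence $H_v$ is entire in $(s,w,t)$. Specialising $t=(1-w)/i$ and $\omega_v=\mathbf{1}$ gives $\lambda_{\pi_v}(m-i)\ll p_v^{(m-i)(1-\Ree(w))+\epsilon}$ in the strip $\frac12\le \Ree(w)<1$, so combined with $p_v^{-mw}$ one obtains $D_v\ll p_v^{m\epsilon}$. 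For $v\mid\mathfrak{q}$ the finite-sum expression \eqref{last-of-long-sums} reduces the question to the meromorphic behaviour of its factors: the Gram--Schmidt coefficients $\xi_{\pi_v}(j,k_i)$ contribute at worst a factor $(1-\alpha_{\pi_v}^2)^{-1}$, producing poles exactly where $\omega(\varpi_v)^2p_v^{2it}=p_v^{\pm 1}$, which is the claimed polar set; the inner sum divided by $L(s,\Pi_v\times\pi_v)$ is $L_{\omega,k_2,d_1,d_2}(s,t)$, and \cite[Lemma 14]{BlKh2019reciprocity} applied to $(M,d,g_1,g_2,q)=(p_v^{k_2},p_v^{d_1},1,p_v^{d_2},p_v^n)$ gives its holomorphic continuation to \eqref{Region-Lkdd} with the bound \eqref{bound-for-Lkdd}.

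For the continuation of $D_v$ when $v\mid\mathfrak{q}$, setting $t=(1-w)/i$ and $\omega_v=\mathbf{1}$ turns the potential polar divisor into $p_v^{2-2w}=p_v^{\pm 1}$. The minus sign gives $w=3/2$, which lies outside \eqref{Region-G}, so only $w=1/2$ requires attention. Here one exploits the bilinear structure in $(k_1,k_2)$: after collecting the $k_1$-sum at fixed $(j,k_2)$, one obtains the polynomial $E_{j,k_2}(w)$ introduced in the text, and the explicit values of $\xi_{\pi_v}(j,k)$ yield $E_{j,k_2}\equiv 0$ for $j\ge 2$, $E_{0,0}(w)=1$, and $E_{1,k_2}(w)=t_{1,k_2}(w)/(1-p_v^{2w-3})$ with $t_{1,k_2}$ entire. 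Thus the only surviving denominator zero is $w=3/2$, which lies outside \eqref{Region-G}, and combined with the holomorphy of $L_{\omega,k_2,d_1,d_2}(s,(1-w)/i)$ on the same region, this proves holomorphy of $D_v$ on \eqref{Region-G}.

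For the size bound in the $v\mid\mathfrak{q}$ case, one inserts into \eqref{last-of-long-sums} the Ramanujan bound $\lambda_{\Pi_v}(\nu_1,\nu_2)\ll p_v^{(\nu_1+\nu_2)\theta+\epsilon}$, the estimate \eqref{xi-ramanujan} for $\xi_{\pi_v}(j,k)$ (which remains valid after specialisation since $\alpha_{\pi_v}$ stays bounded away from $\pm 1$ uniformly in $p_v$ on $\frac12\le \Ree(w)<1$), and \eqref{bound-for-Lkdd} for $L_{\omega,k_2,d_1,d_2}(s,(1-w)/i)$. The resulting triple sum over $d_1,d_2,j$ and $k_1,k_2$ becomes a geometric series in $p_v^{j(1-2\Ree(w))}$ and $p_v^{j(1-\Ree(s)-\Ree(w))}$, each $O(1)$ on $\frac12\le \Ree(s),\Ree(w)<1$, delivering the stated bound $p_v^{n(-1+\theta+\epsilon)}$. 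The main obstacle is the cancellation of the apparent pole of $D_v$ at $w=1/2$: verifying $E_{j,k_2}\equiv 0$ for $j\ge 2$ and relocating the surviving pole to $w=3/2$ via the explicit Gram--Schmidt formulas is the heart of the argument, since without it holomorphy would fail precisely on the central line $\Ree(w)=1/2$.
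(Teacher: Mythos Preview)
Your proof follows the paper's argument step by step: the same case split on $v$, the same use of \eqref{Hv-for-l} for $v\mid\mathfrak l$, the same decomposition of $H_v$ for $v\mid\mathfrak q$ via \eqref{last-of-long-sums} into the Gram--Schmidt coefficients $\xi_{\pi_v}(j,k)$ and the factors $L_{\omega,k_2,d_1,d_2}$, the same appeal to \cite[Lemma~14]{BlKh2019reciprocity} for their continuation, and the same $E_{j,k_2}$ computation to kill the apparent pole of $D_v$ on $\Ree(w)=\tfrac12$.

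One caution on the size bound for $v\mid\mathfrak q$: your parenthetical claim that \eqref{xi-ramanujan} ``remains valid after specialisation since $\alpha_{\pi_v}$ stays bounded away from $\pm 1$ uniformly in $p_v$ on $\tfrac12\le \Ree(w)<1$'' is false on the boundary line: at $w=\tfrac12$ one computes $\alpha_{\pi_v}=1$ for \emph{every} $p_v$, so the implied constant in \eqref{xi-ramanujan} blows up. The paper's own phrasing (``argue analogously to Proposition~\ref{prop-local-comp}(iii)'') glosses over the same point. The clean repair is one you have already set up: carry the bound through the $E_{j,k_2}$ form rather than the individual $\xi$'s. Since $E_{j,k_2}\equiv 0$ for $j\ge 2$ and $|1-p_v^{2w-3}|\ge 1-p_v^{-1}$ for $\Ree(w)<1$, only $j\in\{0,1\}$ survive and the factor $(1-\alpha_{\pi_v}^2)^{-1}$ never enters the estimate; the remaining finite sum over $(d_1,d_2,k_2)$ together with \eqref{bound-for-Lkdd} gives $D_v\ll p_v^{n(-1+\theta+\epsilon)}$ directly.
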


\section{The degenerate term}\label{degenerate-sec}

In this section we study the term $\mathcal{D}_{s,w}(\Phi)$ given by \eqref{D-def} and its companion $\mathcal{D}_{s',w'}(\widecheck{\Phi})$. First, by rapid decay of Whittaker functions and the action of the Weyl group of $\GL(3)$, we may see that both converge for any values of $s,w\in \Cc$. This is all that is needed to know with respect to these terms for Theorem \ref{main}.

Let us now turn to their use in Theorem \ref{Spherical}. Here we make the specialization to $\Phi=\Phi^{\mathfrak{q},\mathfrak{l}}$. It turns out that is easier to study first the term $\mathcal{D}_{s',w'}(\widecheck{\Phi})$ so we start with this one and later deduce an analogous result for the other by using their symmetry. First, we recall that 
\begin{equation}\label{int-for-d-check}
  \mathcal{D}_{s',w'}(\widecheck{\Phi})=2d_F^{\frac72-3s'-w'}\int_{F^{\times}\backslash \mathbb{A}^{\times}}\int_{F\backslash \mathbb{A}}\int_{F^{\times}\backslash \mathbb{A}^{\times}} \widecheck{\Phi}\begin{pmatrix} z(u)n(x)a(y)&\\&1 \end{pmatrix} |u|^{2s'-1}|y|^{s'+w'-1}\d^{\times}u\,\d x\,\d^{\times}y.
\end{equation}
We will show that, in the region
  \begin{equation}\label{range-for-J}
\Ree(3s+w)>1,\Ree(s+w),\Ree(2s)>\theta,
\end{equation}
it satisfies $\mathcal{D}_{s',w'}(\widecheck{\Phi})\ll_{s,w,\epsilon} (N\mathfrak{l})^{\theta-\Ree(s)-\Ree(w)+\epsilon}$.

We begin by noticing that, using the definition of $\widecheck{\Phi}$, reversing the change of variables used in the proof of Proposition \ref{abstract-reciprocity} and changing the order of summation, we see that the integral in \eqref{int-for-d-check} equals

$$
\int_{(F^{\times}\backslash \mathbb{A}^{\times})^2} \left(\int_{F\backslash \mathbb{A}}\Phi\left(\begin{pmatrix} 1&&x\\&1&\\&&1 \end{pmatrix} \begin{pmatrix} z(u)a(y)&\\&1 \end{pmatrix}\right) \d x\right)|u|^{2s-1}|y|^{s+w-1}\d^{\times}u\,\d^{\times}y.
$$
By the Whittaker expansion of $\Phi$, the inner integral is

$$
  \int_{F\backslash \mathbb{A}}\sum_{\gamma \in N_2(F)\backslash \GL_2(F)}W_{\Phi}\left(\begin{pmatrix} \gamma&\\&1 \end{pmatrix} \begin{pmatrix} 1&&x\\&1&\\&&1 \end{pmatrix} \begin{pmatrix} z(u)a(y)&\\&1 \end{pmatrix}\right) \d x,
$$
which by elementary manipulations and changing the order of summation and integration, we arrive at
$$
\sum_{\gamma \in N_2(F)\backslash \GL_2(F)}W_{\Phi} \begin{pmatrix} \gamma z(u)a(y)&\\&1 \end{pmatrix}\int_{F\backslash \mathbb{A}}\psi(\gamma_{21}x) \d x, 
$$
where $\gamma_{21}$ is the lower left entry of $\gamma$. Since $\gamma_{21}\in F$, the inner integral vanishes unless $\gamma_{21}=0$, in which case, it equals one. In other words, we may change the sum over $N_2(F)\backslash\GL_2(F)$ into a sum over $N_2(F)\backslash B_2(F)$, which can be parametrized by $Z_2(F)A_2(F)$. Altoghether, this implies that
\begin{multline}\label{eulerian-for-d}
  \mathcal{D}_{s',w'}(\widecheck{\Phi}) = 2d_F^{\frac72-3s'-w'} \int_{(F^{\times}\backslash \mathbb{A}^{\times})^2} \sum_{\gamma\in Z_2(F)A_2(F)}W_{\Phi}\begin{pmatrix} \gamma z(u)a(y)&\\&1 \end{pmatrix}|u|^{2s-1}|y|^{s+w-1}\d^{\times}u\,\d^{\times}y\\
  =2d_F^{\frac72-3s'-w'}\int_{(\mathbb{A}^{\times})^2} W_{\Phi}\begin{pmatrix} z(u)a(y)&\\&1 \end{pmatrix}|u|^{2s-1}|y|^{s+w-1}\d^{\times}u\,\d^{\times}y.
\end{multline}

Suppose that $\Ree(s)$ and $\Ree(w)$ are sufficiently large. We are now in a fairly advantageous position as the integral above can be factored into local ones. These local integrals are

$$
  \mathcal{J}_v = \int_{(F_v^{\times})^2} W_{\Phi_v}\begin{pmatrix} z(u)a(y)&\\&1 \end{pmatrix}|u|^{2s-1}|y|^{s+w-1}\d^{\times}u\,\d^{\times}y.
$$

We notice that for a finite place $v$ for which $\Pi_v$ is unramified, this equals

$$
p_v^{d_v(3s+w-2)}\sum_{\nu_1,\nu_2\geq 0}\frac{\lambda_{\Pi_v}(\nu_1,\nu_2)}{p_v^{\nu_1(s+w)+2\nu_2s}},
$$
whose inner sum we recognize as being the local factor of Bump's double Dirichlet series (see \textit{e.g.} \cite[\S 6.6]{goldfeld-book}). In particular, it follows that for $\Ree(s+w),\Ree(2s)>\theta$ (recall the bound $\lambda_{\Pi_v}(\nu_1,\nu_2)\ll p_v^{(\nu_1+\nu_2)\theta}$), the above equals
\begin{equation}\label{local-for-bump-double}
  \mathcal{J}_v^0:=p_v^{d_v(3s+w-2)}\frac{L(s+w,\Pi_v)L(2s,\overline{\Pi_v})}{\zeta_v(3s+w)} \asymp_{s,w} 1.
\end{equation}
As for the remaining places, we first observe that for $v\mid \mathfrak{q}$, the unipotent averaging has no effect on the values of the Whittaker function at diagonal element. Thus, it follows that the local integral $\mathcal{J}_v$ will also coincide with \eqref{local-for-bump-double}. Furthermore, \eqref{local-for-bump-double} also holds for archimedean $v$. For real places this is done in \cite{bump-book-gl3} and for the complex places this is \cite[Theorem 1]{BF1989Mellin}. Finally, for $v\mid \mathfrak{l}$, we have
$$
W_{\Phi_v}\begin{pmatrix} z(u)a(y) &\\&1 \end{pmatrix}= \delta_{v(y)\geq m-d_v}W_{\Pi_v}\begin{pmatrix} z(u)a(y) &\\&1 \end{pmatrix},
$$
where $m=v(\mathfrak{l})$. Hence, in this case, the local factor is
$$
\mathcal{J}_v=p_v^{d_v(3s+w-2)}\sum_{\nu_1\geq m,\nu_2\geq 0}\frac{\lambda_{\Pi_v}(\nu_1,\nu_2)}{p_v^{\nu_1(s+w)+2\nu_2s}},
$$
which, by using yet again the Ramanujan bound for $\lambda_{\Pi_v}(\nu_1,\nu_2)$, we may see that it converges for $\Ree(s+w),\Ree(2s)>\theta$, where it satisfies

%
$$
\mathcal{J}_v\ll_{\epsilon} p_v^{m(\theta-\Ree(s)-\Ree(w)+\epsilon)}.
$$
In particular, if $\mathfrak{l}=1$,

\begin{equation}\label{D-central}
  \mathcal{D}_{\frac12,\frac12}(\widecheck{\Phi})=2d_F^{\frac32}\frac{\Lambda(1,\Pi)\Lambda(1,\overline{\Pi})}{\xi_F(2)}.
\end{equation}

Now, notice that $\mathcal{D}_{s,w}(\Phi)$ is the same as  $\mathcal{D}_{s',w'}(\widecheck{\Phi})$ but with $(\mathfrak{q},\mathfrak{l},s,w)$ replaced by $(\mathfrak{l},\mathfrak{q},s',w')$. This allows us to immediately reuse our efforts in this section to study the latter function as well. We record the results for both these functions in a weaker form in the following proposition.

\begin{prop}\label{degenerate-prop}
  Let $\mathcal{D}_{s,w}(\Phi)$ and $\mathcal{D}_{s,w}(\Phi)$ be as defined in \eqref{D-def} with $\Phi=\Phi^{\mathfrak{q},\mathfrak{l}}$ and $\widecheck{\Phi}$ given by \eqref{check-Phi}. Then they are entire function of $s$ and $w$ and, in the region
$$
\Ree(s),\,\Ree(w),\,\Ree(s'),\,\Ree(w')>\frac14,
$$
they satisfy 
$$
\mathcal{D}_{s,w}(\Phi)\ll_{s,w,\epsilon}(N\mathfrak{q})^{\theta -\Ree(s+w)+\epsilon} 
$$
and
$$
\mathcal{D}_{s',w'}(\widecheck{\Phi})\ll_{s,w,\epsilon}(N\mathfrak{l})^{\theta -\Ree(s+w)+\epsilon}. 
$$
\end{prop}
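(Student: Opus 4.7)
The plan is to reduce both quantities to Eulerian products via the Whittaker expansion of $\Phi$ and then bound each local factor. Throughout, one exploits the identity $s'+w'=s+w$ and the symmetric form of $\mathcal{D}_{s,w}(\Phi)$ and $\mathcal{D}_{s',w'}(\widecheck{\Phi})$: the former is obtained from the latter by swapping $(\mathfrak{q},\mathfrak{l},s,w)\leftrightarrow(\mathfrak{l},\mathfrak{q},s',w')$. It therefore suffices to handle $\mathcal{D}_{s',w'}(\widecheck{\Phi})$; the bound for $\mathcal{D}_{s,w}(\Phi)$ follows by this symmetry. Entirety of both functions in $(s,w)$ is a consequence of the rapid decay of $\Phi$ as a cusp form on $\GL(3)$, combined with gauge estimates for $W_\Phi$ and the action of $w_{23}$ in \eqref{check-Phi}, which give uniform absolute convergence of the defining integral in every vertical strip.

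Starting from \eqref{int-for-d-check}, I would first reverse the change of variables used in the proof of Proposition \ref{abstract-reciprocity} so that $\widecheck{\Phi}$ is re-expressed in terms of $\Phi$ with an additional translation by $n(x)$ in the upper-right corner of $\GL(3)$. Inserting the Whittaker--Fourier expansion of $\Phi$ along $N_2(F)\backslash\GL_2(F)$ and integrating $\psi(\gamma_{21}x)$ over $F\backslash\mathbb{A}$ kills every term with $\gamma_{21}\neq 0$, collapsing the sum to the standard Borel. Parametrising the remaining contribution by $Z_2(F)A_2(F)$ unfolds the $(u,y)$ integrals onto $(\mathbb{A}^\times)^2$, yielding the purely local expression in \eqref{eulerian-for-d}, which factors as $2d_F^{\frac{7}{2}-3s'-w'}\prod_v\mathcal{J}_v$.

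Each $\mathcal{J}_v$ is then estimated in four cases. At finite $v\nmid\mathfrak{q}\mathfrak{l}$ the Shintani formula for $W_{\Pi_v}$ identifies $\mathcal{J}_v$ with the local factor of Bump's double Dirichlet series, yielding the closed form \eqref{local-for-bump-double} which is $\asymp_{s,w} 1$ in the stated region using the Ramanujan bound $\lambda_{\Pi_v}(\nu_1,\nu_2)\ll p_v^{(\nu_1+\nu_2)\theta}$. At $v\mid\mathfrak{q}$, the averaging in \eqref{Phi-q} is over the upper-right unipotent entry, which does not affect $W_{\Phi_v}$ on diagonal matrices, so the same evaluation applies. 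The archimedean factors admit an analogous closed form via Stade's formula (real case) and Bump--Friedberg \cite{BF1989Mellin} (complex case). Finally, at $v\mid\mathfrak{l}$, averaging over $\mathfrak{m}_v^{-m}/\mathfrak{o}_v$ in the upper-middle entry forces $v(y)\ge m-d_v$ after Fourier analysis, truncating the diagonal Dirichlet sum to $\nu_1\ge m$; a crude application of the Ramanujan bound then gives $\mathcal{J}_v\ll_\epsilon p_v^{m(\theta-\Ree(s+w)+\epsilon)}$, valid in the region \eqref{range-for-J}.

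Multiplying the local estimates over all places produces $\mathcal{D}_{s',w'}(\widecheck{\Phi})\ll_{s,w,\epsilon}(N\mathfrak{l})^{\theta-\Ree(s+w)+\epsilon}$. The hypothesis $\Ree(s),\Ree(w),\Ree(s'),\Ree(w')>\frac14$ implies \eqref{range-for-J}: one gets $\Ree(s+w)>\frac12>\theta$, $\Ree(2s)>\frac12>\theta$, and $\Ree(3s+w)=2\Ree(w')+1>\frac32$. The main subtlety is the $v\mid\mathfrak{l}$ bookkeeping, where extracting the genuine savings $p_v^{-m(\Ree(s+w)-\theta)+O(\epsilon)}$ close to $\Ree(s+w)=\frac12$ relies on the Luo--Rudnick--Sarnak bound $\theta<\frac12$ together with a uniform estimate of the truncated local Dirichlet tail rather than the closed-form $L$-factor evaluation that worked at the unramified places.
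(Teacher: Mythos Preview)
Your proof is correct and follows essentially the same approach as the paper: unfold $\mathcal{D}_{s',w'}(\widecheck{\Phi})$ via the Whittaker expansion to the Eulerian form \eqref{eulerian-for-d}, evaluate the local factors $\mathcal{J}_v$ in the four cases (unramified finite, $v\mid\mathfrak{q}$, archimedean, $v\mid\mathfrak{l}$), and then invoke the symmetry $(\mathfrak{q},\mathfrak{l},s,w)\leftrightarrow(\mathfrak{l},\mathfrak{q},s',w')$ together with $s+w=s'+w'$ for the other bound. The only cosmetic discrepancy is that for the real archimedean places the paper appeals to Bump's $\GL(3)$ double Dirichlet series computation rather than to Stade's formula.
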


\section{Analytic continuation of the Eisenstein part}\label{analytic-continuation}


The conclusion of our next Proposition will be subject to the following hypothesis, whose verification when $\Phi=\Phi^{\mf{q},\mf{l}}$ follows from the main results of Section \ref{local-comp}:

\begin{hyp}\label{hypothesis-meromorphic}
There exists $\delta>0$ such that for every idele character $\omega$, the function
$$
(s,w,t)\mapsto H(\pi(\omega,it)) 
$$
is holomorphic in the region 
$$
\Ree(s),\Ree(w)>\frac12-\delta,\;|\Ima(t)|<\delta.
$$
Moreover, $H(\pi(\omega, (1-w))$ admits a holomorphic continuation to the region 
$$
\frac12 -\delta<\Ree(s),\Ree(w)<1.
$$ 
\end{hyp}

We will show that the term $\mathcal{E}_{s,w}(\Phi)$ admits meromorphic continuation for values of $s$ and $w$ with real parts smaller than $1$. The proof follows the same lines as those of \citep[Lemma 16]{BlKh2019reciprocity} and \citep[Lemma 3]{BlKh2019uniform}. 

\begin{prop}\label{analytic-continuation-prop}

Suppose that $\Pi$ is a cuspidal automorphic representation and let $\Phi\in\Pi$ be an automorphic form such that the associated weight function $H$ satisfies Hypothesis \ref{hypothesis-meromorphic} for some $\delta>0$. Let $\mc{E}_{s,w}(\Phi)$ be given by \eqref{E(H)-def}, defined initially for $\Ree(s),\Ree(w)\gg 1$. It admits a meromorphic continuation to $\Ree(s),\Ree(w)\geq \frac12 -\epsilon$ for some $\epsilon>0$ with at most finitely many polar divisors. If $\frac12\leq \Ree(s),\Ree(w)<1$, its analytic continuation is given by $\mc{E}_{s,w}(\Phi)+\mc{R}_{s,w}(\Phi)$, where

\begin{equation}\label{def-R}
  \mc{R}_{s,w}(\Phi)=\sum_{\pm}\underset{\substack{t=\pm (1-w)/i}}{\operatorname{res}}(\pm i)\frac{\Lambda(s+it,\Pi)\Lambda(s-it,\Pi)\xi_F(w+it)\xi_F(w-it)}{\xi_F^{\ast}(1)\xi_F(1+2it)\xi_F(1-2it)}H(\pi(\mathbf{1},it)).
\end{equation}

\end{prop}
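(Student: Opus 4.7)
The plan is to analytically continue $\mc{E}_{s,w}(\Phi)$ from the absolute-convergence region $\Ree(s),\Ree(w)\gg 1$ into $\Ree(s),\Ree(w)\geq\tfrac12-\epsilon$ by means of a contour-shift in the spectral variable $t$, identifying the residues of the $t$-poles that cross the real axis with the expression $\mc{R}_{s,w}(\Phi)$ defined in \eqref{def-R}.

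I would first pinpoint the singular locus of the integrand of \eqref{E(H)-def}. Using the factorisations recorded just after \eqref{M=C+E}, the Rankin--Selberg $L$-values $\Lambda(s\pm it,\Pi\times\omega^{\pm 1})$ are entire in $(s,t)$ since $\Pi$ is cuspidal on $\GL(3)$; the factor $[\xi_F^{\ast}(1)\xi_F(1+2it)\xi_F(1-2it)]^{-1}$ replacing $\Lambda^{\ast}(1,\operatorname{Ad},\pi(\omega,it))^{-1}$ is holomorphic near $\Ima(t)=0$ and in fact has a double zero at $t=0$; and Hypothesis \ref{hypothesis-meromorphic} supplies a $\delta>0$ for which $H(\pi(\omega,it))$ is jointly holomorphic in the range $\Ree(s),\Ree(w)>\tfrac12-\delta$, $|\Ima(t)|<\delta$. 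The only remaining $t$-poles arise from $\Lambda(w,\pi(\omega,it))=\Lambda(w+it,\omega)\Lambda(w-it,\overline{\omega})$, which is entire unless $\omega=\mathbf{1}$; for the trivial character it equals $\xi_F(w+it)\xi_F(w-it)$ and has simple poles at $t=\pm(1-w)/i$.

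I would then perform the contour-shift only for the $\omega=\mathbf{1}$ summand, since for every $\omega\neq \mathbf{1}$ the $t$-integrand is already holomorphic on the target $(s,w)$-region. When $\Ree(w)>1$ the poles $t=\pm(1-w)/i$ lie in opposite open half-planes; as $\Ree(w)$ crosses $1$ they swap half-planes. To preserve analytic behaviour I would deform the $t$-contour so that each pole remains on its original side, and rewrite the result by Cauchy's theorem as the integral along $\Ima(t)=0$ (which defines the continued $\mc{E}_{s,w}(\Phi)$) plus $\pm 2\pi i$ times the residues at the two poles. Combined with the measure $\d t/(2\pi)$ in \eqref{E(H)-def}, these contributions yield precisely the two $(\pm i)\cdot \operatorname{res}$ summands of \eqref{def-R}, assembling into $\mc{R}_{s,w}(\Phi)$. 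The second clause of Hypothesis \ref{hypothesis-meromorphic} enters at exactly this step, guaranteeing that the value $H(\pi(\mathbf{1},\pm(1-w)/i))$ appearing in those residues, and therefore $\mc{R}_{s,w}(\Phi)$ itself, is holomorphic on $\tfrac12-\delta<\Ree(s),\Ree(w)<1$.

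Finally, one must verify that the integral expression defining the continued $\mc{E}_{s,w}(\Phi)$ converges absolutely in the extended region. Rapid vertical decay in $t$ is provided by Stirling asymptotics for the archimedean $\Gamma$-factors of $\xi_F(w\pm it)$ and $\Lambda(s\pm it,\Pi\times\omega^{\pm 1})$; the finite parts are bounded polynomially in the $t$- and conductor-aspects by Phragm\'en--Lindel\"of applied via their functional equations. The main technical obstacle, as in \cite[Lemma 16]{BlKh2019reciprocity} and \cite[Lemma 3]{BlKh2019uniform}, will be the uniformity in $\omega$ needed to ensure absolute convergence of $\sum_{\omega\in\Xi(S)}$ after the shift; this is handled by controlling $H(\pi(\omega,it))$ uniformly in $\omega$ via Hypothesis \ref{hypothesis-meromorphic}, together with the fact that in the intended application $\Phi=\Phi^{\mf{q},\mf{l}}$ forces $H$ to be supported on characters of ramification dividing $\mf{q}\mf{l}$. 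Any polar divisors of the resulting meromorphic continuation are then inherited from the explicit form of $\mc{R}_{s,w}(\Phi)$ obtained after computing the residues, in particular from zeros of the denominator factors $\xi_F(3-2w),\xi_F(2w-1)$ that appear.
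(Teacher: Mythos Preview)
Your proposal is correct and follows essentially the same contour-shift strategy as the paper. The only stylistic differences are that the paper carries out the shift in two explicit moves (first pushing the $t$-contour down to a curve $\Ima t=-\sigma(\Ree t)$, collecting the pole at $w-it=1$, then pushing back to the real line once $\Ree(w)<1$ and collecting the pole at $w+it=1$), and chooses $\sigma(t)$ small enough using non-vanishing of $\Lambda(1+2it,\omega^2)$ on $\Ree=1$ so that neither the denominator nor $H$ acquires singularities on the shifted contour; your single ``deform the contour so each pole stays on its original side'' achieves the same thing. Your convergence discussion and your remark on uniformity in $\omega$ go somewhat beyond what the paper spells out.
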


\begin{proof}

  Let $\delta>0$ to be chosen later. We use non-vanishing of completed Dirichlet $L$-functions $\Lambda(s,\omega)$ at $\Ree(s)=1$ and continuity to define a continuous function $\sigma:\Rr\mapsto (0,\delta)$ so that neither $\Lambda(1-2\sigma-2it,\omega^2)$ nor $H(\pi(\omega,it+\sigma))$ have poles for $0\leq \sigma<\sigma(t)$. 

We start by noticing that we can \textit{analytically} continue $\mc{E}_{s,w}(\Phi)$ to $\Ree(s),\Ree(w)>1$, since in that region, one does not encounter any poles of $\Lambda(w,\pi(\mathbf{1},it))$. Now, suppose that
$$
1<\Ree(s)<1+\sigma(\Ima(s))\text{  and  }1<\Ree(w)<1+\sigma(\Ima(w)).
$$
We shift the contour of the integral defining $\mc{E}_{s,w}(\Phi)$ down to $\Ima{t} = -\sigma(\Ree(t))$. We pick up a pole of $\Lambda(w-it,\omega)$ when $\omega$ is the trivial character and  $w - it = 1$. 

We observe that in view of our choice for $\sigma$, the resulting integral defines a holomorphic function in the region
$$
\begin{cases}
1-\sigma(\Ima(s))<\Ree(s)<1+\sigma(\Ima(s)),\\
1-\sigma(\Ima(t))<\Ree(w)<1+\sigma(\Ima(w)).
\end{cases}
$$

Take now $s$ and $w$ satisfying $1-\sigma(\Ima(s))<\Ree(s)<1$ and $1-\sigma(\Ima(t))<\Ree(w)<1$. We may shift the contour back to the real line  and pick a new pole when $\omega$ is trivial and at $w+it=1$. This proves the desired formula for $1-\sigma(\Ima(s))<\Ree(s)<1$ and $1-\sigma(\Ima(t))<\Ree(w)$ and it follows in general by analytic continuation to all $s,w$ such that $\frac12-\delta<\Ree(s),\Ree(w)<1$ by Proposition \ref{meromorphic-t-variable}.

\end{proof}

\
\section{Conclusion}\label{conclusion}

In this section we put together the results of the last three sections and deduce Theorem \ref{Spherical}. We have seen in Proposition \ref{spectral-terms-almost} that for sufficiently large values of $\Ree(s)$ and $\Ree(w)$, we have the relation
$$
2d_F^{\frac72-3s-w}I(w,\mc{A}_s\Phi)=\mc{M}_{s,w}(\Phi)+\mc{D}_{s,w}(\Phi).
$$

If we assume that $H$ satisfies Hypothesis \ref{hypothesis-meromorphic}, then we may apply Proposition \ref{analytic-continuation-prop} and deduce that for $\frac12-\delta<\Ree(s),\Ree(w)<1$, we have
\begin{equation}\label{I=C+E+R}
  2d_F^{\frac72-3s-w}I(w,\mc{A}_s\Phi)=\mc{M}_{s,w}(\Phi)+\mc{D}_{s,w}(\Phi)+\mc{R}_{s,w}(\Phi).
\end{equation}

Now suppose that $\widecheck{H}$ also satisfies Hypothesis \ref{hypothesis-meromorphic} and furthermore, suppose $\frac12<\Ree(s)\leq \Ree(w)\leq \frac34$. The last assertion implies that
$$
\frac12\leq \Ree(s'),\Ree(w')<1.
$$
Thus, we may deduce that \eqref{I=C+E+R} also holds with $H$, $s$ and $w$ replaced by $\widecheck{H}$, $s'$ and $w'$, respectively. 
The main equality in Theorem \ref{Spherical} is now a direct consequence of Proposition \ref{abstract-reciprocity} and the description of the local weights $H_v(\pi_v)$ from \S \ref{local-comp}. In particular, we show in \S \ref{merom-wrt-spec-param} that the weight function associated to $\Phi^{\mathfrak{q},\mathfrak{l}}$ satisfy Hypothesis \ref{hypothesis-meromorphic}. As for the inequality \eqref{ineq-for-N0}, it follows from Eq. \eqref{def-R}, Proposition \ref{meromorphic-t-variable}, and Proposition \ref{degenerate-prop}.  
 
\subsection{Proof of Corollary \ref{non-vanishing}}
We use Theorem \ref{Spherical} with $s=w=1/2$, $\mf{l}=\mf{o}_F$ and $\mf{q}=\mf{p}$, a prime ideal. We obtain that
$$
\mc{M}(\Phi)=\mc{D}(\widecheck{\Phi})+\mc{R}(\widecheck{\Phi})-\mc{D}(\Phi)-\mc{R}(\Phi)+\mc{M}(\widecheck{\Phi}),
$$
where we dropped the $\frac 12,\frac 12$ from the index for brevity. It follows from Proposition \ref{analytic-continuation-prop} and the fact that $\widehat{\lambda}_{\pi(\mathbf{1},\pm 1/2)}(1/2,\mathfrak{q})=1$ for any ideal $\mathfrak{q}$ that
$$
\mc{R}(\widecheck{\Phi})=2\frac{\Lambda(1,\Pi)\Lambda(0,\Pi)}{\xi_F(2)}.
$$
Furthermore, we have from \eqref{D-central} that
$$
\mc{D}(\widecheck{\Phi})=2d_F^{3/2}\frac{\Lambda(1,\Pi)\Lambda(1,\overline{\Pi})}{\xi_F(2)}=2\frac{\Lambda(1,\Pi)\Lambda(0,\Pi)}{\xi_F(2)}.
$$
Moreover, in view of Proposition \ref{prop-local-comp} and Proposition \ref{degenerate-prop}, we may obtain that
$$
\mc{R}(\Phi),\mc{D}(\Phi)\ll (N\mf{p})^{\theta-1+\epsilon}
$$
and
$$
\mc{M}(\Phi)=\frac{\varphi(N\mathfrak{p})}{(N\mathfrak{p})^2}\sum_{\substack{\pi\text{ cusp}^0\\\operatorname{cond}(\pi)=\mf{p}}}\frac{\Lambda(\frac12,\Pi\times \pi)\Lambda(\frac12,\pi)}{\Lambda(1,\operatorname{Ad},\pi)}+O\left((N\mf{p})^{\theta-1}\mc{M}^{\ast}\right),
$$
where
\begin{multline*}
\mc{M}^{\ast}:=\sum_{\substack{\pi\text{ cusp}^0\\\operatorname{cond}(\pi)=\mf{o}_F}}\frac{|\Lambda(\frac12,\Pi\times \pi)\Lambda(\frac12,\pi)|}{|\Lambda(1,\operatorname{Ad},\pi)|}\\
+\sum_{\substack{\omega\in\widehat{F^{\times}U_{\infty}\bs\Aa^{\times}_{(1)}}\\\operatorname{cond}(\omega)=\mathfrak{o}_F}}\int_{-\infty}^{\infty}\frac{|\Lambda(\frac12,\Pi\times \pi(\omega,it))\Lambda(\frac12,\pi(\omega,it))|}{|\Lambda^{\ast}(1,\operatorname{Ad},\pi(\omega,it))|}\frac{\d t}{2\pi}.
\end{multline*}
Finally, it is easy to see that we also have the bound
$$
\mc{M}(\widecheck{\Phi})\ll (N\mf{p})^{\vartheta-1/2}\mc{M}^{\ast}.
$$

Corollary \ref{non-vanishing} will follow provided that one is able to show $\mc{M}^{\ast}\ll 1$. In other words, we just need to ensure that it converges since it is clearly independent of $\mf{p}$.
To see that, we notice that the finite part of $\frac{\Lambda(\frac12,\Pi\times \pi)\Lambda(\frac12,\pi)}{\Lambda(1,\operatorname{Ad},\pi)}$ is bounded polynomially in terms of the eigenvalues of $\pi_v$ for archimedean $v$ and that by Stirling's formula, we have for archimedean $v$:
$$
\frac{L(\frac12,\Pi_v\times \pi_v)L(\frac12,\pi_v)}{L(1,\operatorname{Ad},\pi_v)}\ll |t_{\pi_v}|^Ce^{-2c_{F_v}\pi|t_{\pi_v}|},
$$
for $\pi_v=\pi_v(\mathbf{1},it_{\pi_v})$, where
$$
c_{F_v}=
\begin{cases}
1,\text{ if }F_v=\Rr,\\
2,\text{ if }F_v=\Cc.
\end{cases}
$$

This implies that the factor $\frac{\Lambda(\frac12,\Pi\times \pi)\Lambda(\frac12,\pi)}{\Lambda(1,\operatorname{Ad},\pi)}$ decay exponentially as the $t_{\pi_v}$ grow and the convergence of $\mc{M}^{\ast}$ follows by appealing to the Weyl law for $\GL(2)$ over number fields (cf. \cite[Theorem 3.2.1]{Palm2012thesis}).

\bibliographystyle{alpha}
\bibliography{references}
\end{document}